\def\be{\begin{eqnarray}}
\def\ee{\end{eqnarray}}
\def\b*{\begin{eqnarray*}}
	\def\e*{\end{eqnarray*}}
\newcommand{\rmi}{{\rm (i)$\>\>$}}
\newcommand{\rmii}{{\rm (ii)$\>\>$}}
\newcommand{\E}{\mathbb{E}}
\newcommand{\Lsp}{\mathbf{L}}
\newcommand{\R}{\mathbb{R}}
\newcommand{\Z}{\mathbb{Z}}
\def\Cc{{\mathcal C}}
\def\Dc{{\mathcal D}}
\def\Ec{{\mathcal E}}
\def\Lc{{\mathcal L}}
\newcommand{\Mc}{\mathcal{M}}
\def\Nc{{\mathcal N}}
\def\Pc{{\mathcal P}}
\def\Qc{{\mathcal Q}}
\def\Uc{{\mathcal U}}
\def\Wc{{\mathcal W}}
\def\Xc{{\mathcal X}}
\def \eps {\varepsilon}
\def \la {\lambda}
\def \m {\mu}
\def \vp {\varphi}
\def\q{\quad}
\def \sb {\subset}
\def \no{\noindent}
\def \supp{{\rm{supp}}}
\def \mv{\check{\mu}}
\def \po {\overline{\mathsf{P}}}
\def \mmoto {\overline{\mathsf{MMOT}}}
\def \mmotu {\underline{\mathsf{MMOT}}}
\def \ddo {\overline{\mathsf{D}}}
\def \pu {\underline{\mathsf{P}}}
\def \du {\underline{\mathsf{D}}}
\def \Do {\overline{\Dc}}
\def \Du {\underline{\Dc}}
\def \oo {\overline{\mathsf{OT}}}
\def \ou {\underline{\mathsf{OT}}}
\def\bk{\boldsymbol{K}}
\def\bc{\boldsymbol{C}}
\newcommand{\leqcvx}{\preceq_{cx}}
\newcolumntype{P}[1]{>{\centering\arraybackslash}p{#1}}
\title{Robust pricing and hedging of options on multiple assets and its 
numerics\thanks{Submitted to the editors 09 September 2019. Accepted 06 October 
2020.\funding{
Support from the European 
Research Council under the European Union's Seventh Framework Programme (FP7/2007-2013) / ERC grant agreement no. 335421 is 
gratefully acknowledged. JO is also thankful to St.\ John's College 
in Oxford for its financial support. TL is grateful for the support of 
ShanghaiTech University, and in addition, to the University of Toronto and its 
Fields Institute for the Mathematical Sciences, where parts of this work were 
performed. GG gratefully acknowledges the support of University of Michigan and 
an AMS Simons Travel Grant from the American Mathematical Society and Simons 
Foundation.}}}
\author{Stephan Eckstein\thanks{Department of Mathematics, University of 
Konstanz (\email{stephan.eckstein@uni-konstanz.de})}
\and Gaoyue Guo\thanks{Laboratoire MICS, CentraleSup\'elec 
(\email{gaoyue.guo@centralesupelec.fr})}
\and Tongseok Lim\thanks{
Krannert School of Management, Purdue University (\email{lim336@purdue.edu})}
\and Jan Ob{\L}{\'o}j\thanks{Mathematical Institute and St.\ John's College, 
University of Oxford (\email{jan.obloj@maths.ox.ac.uk})}}
\begin{document}
\maketitle
\begin{center} forthcoming in SIAM Journal on Financial Mathematics 
\end{center}\vspace{1em}

\begin{abstract}
	We consider robust pricing and hedging for options written on multiple 
	assets given market option prices for the individual assets. The resulting 
	problem is called the multi-marginal martingale optimal transport problem. 
	We propose two numerical methods to solve such problems: using 
	discretisation and linear programming applied to the primal side and using 
	penalisation and deep neural networks optimisation applied to the dual 
	side. We prove convergence for our methods and compare their numerical 
	performance. We show how adding further information about call option 
	prices at additional maturities can be incorporated and narrows down the 
	no-arbitrage pricing bounds. Finally, we obtain structural results for the 
	case of the payoff given by a weighted sum of covariances between the 
	assets. 
\end{abstract}

\begin{keywords}
	robust pricing and hedging, optimal transport, martingale optimal 
	transport, robust copula, multi-marginal transport, numerical methods, linear programming, machine learning, deep neural networks
\end{keywords}

\begin{AMS}
60G42, 49M25, 49M29, 90C08
\end{AMS}

\section{Introduction}
Mathematical modelling is a ubiquitous aspect of modern the financial industry 
and 
it drives important decision processes. Stochastic models are a key component 
used to describe evolution of risky assets and quantify financial risks. The 
ability to postulate and analyse such models was at the heart of the growth in 
ever more complex derivatives trading and other aspects of the financial 
markets.    
However, understanding well the implications of a given model is not 
sufficient. Equally important is to appreciate the consequences of the model 
being wrong in the sense of being an inadequate or misguided description of the 
reality. The latter issue is often referred to as the \emph{Knightian 
	uncertainty} after \cite{Knight:21}. 
This dichotomy between risk and uncertainty, and the quest to capture both and 
understand their interplay, are at the heart of the field of Robust 
Mathematical Finance. The field is concerned 
with the modelling space, from model-free to model-specific approaches, and 
with understanding and quantifying the impact of making assumptions, and of using or ignoring 
market information. It has been an important area of research, in particular in 
the last decade following the financial crisis, and we refer to \cite{bfhmo} 
and the references therein for an extensive discussion. One of the most active 
research topics within the field has been that of model-independent pricing and 
hedging of derivatives. It goes back to \cite{hobson1998robust} and 
probabilistic methods related to Skorokhod embeddings, see for example 
\cite{brown2001robust,cox2011robusta}. More recently, it has been recast as an 
optimal transport problem with a martingale constraint, see 
\cite{BHLP,galichon2014stochastic} and gained a novel momentum. A significant 
body of research grew studying this \emph{Martingale Optimal Transport} (MOT) 
problem both in discrete and continuous time, see for example 
\cite{BCH:17,BNT:17,DolinskySoner:2014,hou2015robust} and the references 
therein. More recently, first numerical methods for MOT problems were developed 
in \cite{GO,Eckstein2019}. However, all these works assume that markets provide 
sufficient information to derive the joint, multi-dimensional, risk neutral distribution of assets at given maturities. In dimensions greater than one, 
this assumption is unrealistic in most markets.

In contrast, in this paper, we propose to study problems, in dimensions greater 
than one, which are directly motivated by typical market settings and the 
available market data. Our focus is on numerical methods and we aim to deliver 
a proof-of-concept results which, we hope, could spark interest in these 
methods among industry practitioners. 
More precisely, we assume market prices of call and put options are given for 
individual assets - these could be for one or many maturities. For simplicity, 
we focus on the case when such prices are given for enough strikes to derive 
the implied risk-neutral distribution, a standard argument going back to 
\cite{BreedenLitzenberger:78}. Our numerical methods can easily be adjusted to 
the case of only finitely many traded call options and we establish a 
continuity result to justify our focus on the synthetic limiting case. Given 
the market information, we study the implied no-arbitrage bounds for an option 
with a payoff which depends on multiple assets. A simple example, with two 
assets, is given by a spread option. In higher dimensions, natural examples are 
given by options written on an index. We stress that while market information 
translates into risk neutral distributional constraints on individual assets, 
the global no-arbitrage constraint translates into a global martingale 
constraint which binds the assets together and is sharper than just requiring 
that each of the assets were a martingale in its own filtration. 

We call the resulting optimisation problem a \emph{Multi-Marginal Martingale 
	Optimal Transport} (MMOT) problem. It was first studied in \cite{Lim} who 
focused on its duality theory. The duality is of intrinsic financial interest: 
while the primal problem corresponds to the risk-neutral pricing, the dual side 
corresponds to optimising over hedging strategies. The equality between the 
primal and the dual problem corresponds to the superhedging duality in 
mathematical finance. We exploit it here to propose two different numerical 
methods for MMOT problems. First, we adopt the approach of \cite{GO} and 
propose a computational method for the primal problem. This relies on 
discretisation of the marginal measures combined with a relaxation of the 
martingale condition. Theorem \ref{thm:mot-lp} establishes convergence of the 
approximating problems to the original MMOT problem. Each approximating problem 
in turn, is a discrete LP problem and can be solved efficiently. The main 
disadvantage of this approach is the curse of dimensionality: LP problems with 
too many constraints quickly exceed memory capacity. Our second approach builds 
on the work of \cite{Eckstein2019} to develop a computational method for 
solving the dual problem. The dual problem involves an optimisation over 
hedging strategies and we approximate these with elements of a deep neural 
network (NN). To employ the stochastic gradient descent we change the problem 
from a singular one, with the superhedging inequality constraint, to a smooth 
one with an integral penalty term. Theorem \ref{thm:cnv_nn} shows that under 
suitable assumptions the results converge, with the penalty term $\gamma\to 
\infty$ and the size of the NN  $m\to \infty$, to the value of the MMOT 
problem. 

The numerical methods we propose, including Theorems \ref{thm:mot-lp} and \ref{thm:cnv_nn}, are natural extensions of the previous MOT studies cited above and do not require fundamental new insights. In this way, our results illustrate that these studies are generic in nature and can be extended or generalised to  other similar contexts. Apart from the MMOT problem studied here, we mention the optimization problem considered by \cite{KramkovXu:19}. A large focus of our paper then lies on making the method practically applicable and testing their numerical performance. 
We discuss the details of the implementation and provide GitHub links with the codes.
We show that the NN approach agrees with the LP approach but is also able to 
handle higher dimensional settings. Both approaches are shown to recover 
theoretical values, when these can be computed independently. By focusing on a 
MMOT problem which corresponds to real world industry scenarios we hope to 
showcase the capacity of the robust approach to capture and quantify, in a 
fully non-parametric and model-agnostic way, the impact of various sources of 
information, or ways to trade, for a given pricing and hedging problem. We 
illustrate this on both synthetic and real world data. 
In particular, we consider the case of payoffs only depending on the assets' 
terminal values at time $T$, e.g., spread options and options paying covariance 
between the assets. Such examples allow us to capture the value of additional 
market information from intermediate maturities. Indeed, we can start by considering only the call prices at 
time $T$, i.e., MMOT becomes just an optimal transport problem, or the so-called robust copula, see for example \cite{Wang2013}. 
Adding call prices at earlier maturities $T_i<T$ then reduces the range of no-arbitrage 
prices and thus captures the value of this information for robust pricing and 
hedging. This, along with the structure of optimisers, can be understood and 
characterised theoretically as Theorem \ref{thm:structure} shows. 

The remainder of this paper is structured as follows. In Section \ref{sec:mmot} 
we introduce the MMOT problem and its duality. Then we develop our 
computational methods: first the LP approach in Section \ref{sec:LPtheory} and 
then the NN approach in Section \ref{sec:NNtheory}. All the numerical examples 
are presented in the subsequent Section \ref{sec:num_ex}. Finally, in Section 
\ref{sec:structure}, we discuss some structural results for the particular case 
of the covariance payoff.

\section{The MMOT problem}\label{sec:mmot}
We denote by $\Pc(\R^d)$ the set of probability measures on $\R^d$ with a finite first moment. 
Measurable (resp.\ continuous) functions from $\R^d$ to $\R^k$ are denoted $\Lsp^0(\R^d;\R^k)$ (resp.\ 
$C(\R^d;\R^k)$) and we write $C_b$ for continuous bounded functions. For a 
$\mu\in \Pc(\R^d)$, $\Lsp^1(\mu)$ denotes the space of functions $f : \R^d \to \R$ with $\int |f| d\mu < \infty$.

Let $T, d \in \mathbb{N}$ and $\Xc_1 = \Xc_2 = ... = \Xc_T = \mathbb{R}^d$, 
$\Xc = \Xc_1 \times ... \times \Xc_T$. We denote the natural projection from 
$\Xc$ onto its $t^\textrm{th}$ component by $X_t$, and by $X_{t, i}$ the further 
projection onto the $i$-th component of $\Xc_t$. For $x \in \Xc$ we write $x_t 
= X_t(x)$ and $x_{t, i} = X_{t, i}(x)$.
Given $\m_{t, i} \in \Pc(\mathbb{R})$, let $\mv_t = (\m_{t, i})_{1 \leq i \leq 
	d}$ and $\mv=(\mv_t)_{1\leq t\leq T}$. We define $\Pi(\mv) = \Pi(\mv_1, 
	..., 
\mv_T) \subset \Pc(\Xc)$ as the set of measures $\pi$ satisfying $\pi \circ 
X_{t, i}^{-1} = \mu_{t, i}$ for $1\leq t\leq T$ and $1\leq i\leq d$. We stress that throughout we only consider measures with a finite first moment.

We assume from now onwards that $\mu_{t,i}$ are increasing in convex order in $t$, denoted $\mu_{t, i} \leqcvx \mu_{t+1, i}$, by which we mean that
$$\int f d\mu_{t, i} \le \int f d\mu_{t+1, i}\quad \text{for all convex functions }f,\;\; 1\leq t\leq T-1, 1\leq i\leq d.$$
We define $\Mc(\mv)=\Mc(\mv_1, ..., \mv_T) \subseteq \Pi(\mv)$ to be the subset 
consisting of martingale measures, i.e., measures $\pi$ such that 
\[
\mathbb{E}_\pi[X_{t+1}|X_1, ..., X_t] = X_t,\quad 1\leq t\leq T-1.
\]
It follows from \cite{strassen1965existence} that our increasing convex order assumptions on 
$\mv$ are precisely the necessary and sufficient conditions for $\Mc(\mv)\neq 
\emptyset$.

Our object of interest in this paper is the \emph{multi-marginal martingale optimal transport} (MMOT) defined as 
\begin{equation}
\begin{split}
\mmoto(\mv) \q &:= \q \po(\mv) \q := \q \sup_{\pi\in \Mc(\mv)}~ \int c \,d\pi, 
\label{def:mmot} \\
\mmotu(\mv) \q &:= \q \pu(\mv) \q := \q \inf_{\pi\in \Mc(\mv)}~ \int c \,d\pi,
\end{split}
\end{equation}
for a given measurable function $c:\Xc\to\R$ to optimize. We recall that the 
martingale condition encodes the financial requirement of absence of arbitrage. 
We mostly use the notation $\po, \pu$. However when we want to stress the 
martingale condition, we write $\mmoto, \mmotu$. Without this condition, the 
problem above corresponds to the multi-marginal optimal transport, given by
\begin{equation}
\begin{split}
\oo(\mv) \q &:= \q \sup_{\pi\in \Pi(\mv)}~ \int c \,d\pi, \label{def:ot} \\
\ou(\mv) \q &:= \q \inf_{\pi\in \Pi(\mv)}~ \int c \,d\pi. 
\end{split}
\end{equation}
In the particular case when $d=2$ and $c(x)=c(x_{T,1},x_{T,2})$ the above 
corresponds to the classical optimal transport problem on $\R$ as only the 
marginals $\mu_{T,i}$, $i=1,2$, impact the problem. The case 
$c(x)=|x_{T,1}-x_{T,2}|$ gives $\ou(\mv)= \Wc_1(\mu_{T,1}, \mu_{T,2})$, which 
is the Wasserstein distance of order $1$, a metric on $\Pc(\R)$ which we will 
use extensively in Section \ref{sec:LPtheory}. Note that in general \[\ou 
\leqslant  \mmotu  \leqslant  \mmoto \leqslant  \oo.\] Both problems 
\eqref{def:mmot} and \eqref{def:ot} admit a dual formulation. The latter can be 
found in \cite{bartl2017duality}, while the former was developed in \cite{Lim}, 
following earlier works on the martingale optimal transport in \cite{BHLP}. We 
recall it here as it will be used for our numerical methods. Define 
 $\Do$, respectively $\Du$, to be the set of functions $(\varphi_{t, 
i})_{1\leq t \leq T, 1 \leq i \leq d}$ and $(h_{t, i})_{1\leq t \leq T-1, 1 \leq i \leq d}$, where 
$\varphi_{t, i} \in \Lsp^1(\mu_{t, i})$ and $h_{t, i} \in \Lsp^0(\mathbb{R}^{t 
	\cdot d};\mathbb{R})$, satisfying for all $x \in \Xc$:
\b*
\sum_{t=1}^T \sum_{i=1}^d \varphi_{t, i}(x_{t, i}) + \sum_{t=1}^{T-1} 
\sum_{i=1}^{d} h_{t, i}(x_1, ..., x_t) (x_{t+1, i} - x_{t, i}) &\ge& c(x), 
\quad \text{respectively} \\
\sum_{t=1}^T \sum_{i=1}^d \varphi_{t, i}(x_{t, i}) + \sum_{t=1}^{T-1} 
\sum_{i=1}^{d} h_{t, i}(x_1, ..., x_t) (x_{t+1, i} - x_{t, i}) &\le& c(x).
\e*
Then the corresponding dual problems are defined by
\begin{equation}
\begin{split}
\ddo(\mv) \q :=& \q \inf_{(\vp_{t, i},h_{t, i}) \in\Do}~ 
\sum_{t=1}^T\sum_{i=1}^d \int \vp_{t, i} \,d\mu_{t, i}, \label{def:mmotdual} \\
\du(\mv) \q :=& \q \sup_{(\vp_{t, i},h_{t, i}) \in\Du}~ \sum_{t=1}^T 
\sum_{i=1}^d \int \vp_{t, i} \,d\mu_{t, i}.
\end{split}
\end{equation}
Let us now discuss briefly the financial interpretation of the objects introduced so far. We refer the reader to \cite{BHLP,bfhmo} and the references therein for more details and for background on robust financial mathematics. We consider a market with $d$ traded risky assets and $T$ time steps, or maturities. All prices are discounted, i.e., given in units of a fixed numeraire (e.g., the bank account). The price of $i^\textrm{th}$ asset at $t^\mathrm{th}$ maturity is denoted $x_{t,i}$ above. We suppose market provides call/put option prices for each of these assets for 
all $T$ maturities and all strikes. This, through a classical argument of 
\cite{BreedenLitzenberger:78}, is equivalent to fixing the risk neutral 
marginal distributions of the assets. We denote $\mu_{t,i}$ the risk-netural 
marginal distribution of $i^\textrm{th}$ asset at $t^\mathrm{th}$ maturity. In 
particular, the first maturity will often correspond to today and then   
$\mu_{1,i}=\delta_{s_{i}}$ where $s_i$ is today's price of the  
$i^{\textrm{th}}$ asset. $\Mc(\mv)$ is the set of all risk neutral (i.e., 
martingale) measures for the whole market which are calibrated to the given 
option prices. Thus, the primal problem \eqref{def:mmot} takes the risk-netural 
pricing perspective and gives the upper and lower bounds for the price of a 
derivative with payoff $c$ at the final maturity. The dual problem 
\eqref{def:mmotdual} considers the same quantities but from the hedging 
perspective. Here, $\varphi_{t, i}(x_{t, i})$ represents the static position 
synthetised from $t^\textrm{th}$ maturity call/put options on the 
$i^\textrm{th}$ asset and 
$h_{t,i}$ is the number of shares of the $i^\textrm{th}$ asset held between the $t^\textrm{th}$ and $(t+1)^\textrm{th}$ maturity. Importantly, $h_{t,i}$ is a function of the past prices of \emph{all} assets across the previous maturities. This, on the primal side, corresponds to the requirement that the assets are jointly martingale and not just each on their own. Classically, the pricing and the hedging approach should give the same no-arbitrage price range. This is also the case here as stated in the following theorem.
\begin{theorem}\label{thm:duality}
	Let $\mv \in \Pc(\R)^{dT}$ with $\Mc(\mv)\neq\emptyset$, and let $\psi : 
	\Xc \rightarrow \mathbb{R}$ be given by 
	$\psi(x)=1+\sum_{t=1}^T\sum_{i=1}^d |x_{t,i}|$. 
	If $c:\Xc\to\R$ is lower semi-continuous and $ c\geq -K\psi$ on $\Xc$ for 
	some $K>0$ then $\pu(\mv)=\du(\mv)$. If $c:\Xc\to\R$ is upper 
	semi-continuous and $c\leq K\psi$ on $\Xc$ for some $K>0$ then 
	$\po(\mv)=\ddo(\mv)$. In both cases, the primal problems are attained and 
	the dual values remain unchanged when one restricts to $\varphi_{t, i} \in 
	\Lsp^1(\mu_{t, i}) \cap C(\R;\R)$, $h_{t, i} \in C_b(\mathbb{R}^{t \cdot 
		d};\mathbb{R}^d)$, $1\leq t \leq T$, $1\leq i\leq d$.
\end{theorem}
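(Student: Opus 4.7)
The plan is to prove the upper duality $\po = \ddo$; the lower one $\pu = \du$ follows by the same argument applied to $-c$. I would adapt the Hahn--Banach strategy of \cite{BHLP} to the multi-marginal, multi-dimensional setting of \cite{Lim}, with extra care for the $\psi$-linear growth condition.

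\emph{Stage 1 (weak duality and primal attainment).} For $\pi \in \Mc(\mv)$ and $(\varphi_{t,i}, h_{t,i}) \in \Do$, integrate the pointwise super-hedging inequality against $\pi$: the marginal constraint turns $\int \varphi_{t,i}(x_{t,i})\,d\pi$ into $\int \varphi_{t,i}\,d\mu_{t,i}$, and the tower property makes each martingale correction vanish, provided the integrand is $\pi$-integrable. This integrability is first established for continuous bounded $h_{t,i}$ and then extended by truncation to the general measurable case admitted in $\Do$; this yields $\po \leq \ddo$. For primal attainment, $\Pi(\mv)$ is weakly tight by Prokhorov's theorem since the marginals are fixed with finite first moment, $\Mc(\mv)$ is a weakly closed subset (the martingale property is preserved under weak limits, since the first-moment bound $\int \psi\,d\pi = 1 + \sum_{t,i}\int|y|\,d\mu_{t,i}$ is constant on $\Pi(\mv)$ and yields uniform integrability), and the bound $c \leq K\psi$ combined with upper semi-continuity of $c$ allows passage to the limit along a maximising sequence.

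\emph{Stage 2 (strong duality for continuous bounded $c$).} View $p(f) := \ddo(\mv;f)$ as a sublinear, monotone functional on $C_b(\Xc;\R)$; Stage 1 gives $p(f) \geq 0$ when $f \geq 0$. A Hahn--Banach extension produces a positive linear functional $\Lambda \leq p$ on $C_b(\Xc;\R)$ with $\Lambda(c) = p(c)$. The delicate step is to verify tightness of $\Lambda$ using the tightness of each $\mu_{t,i}$ together with the structure of $p$, so that Riesz representation yields a Radon probability measure $\pi$. Testing $\Lambda$ against functions of the form $\varphi_{t,i}(x_{t,i})$ recovers the marginal constraints, and against $h_{t,i}(x_1,\ldots,x_t)(x_{t+1,i}-x_{t,i})$ with $h_{t,i} \in C_b$ recovers the martingale constraints, so $\pi \in \Mc(\mv)$ and $\po \geq p(c) = \ddo$.

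\emph{Stage 3 (extension to upper semi-continuous $c \leq K\psi$ and to continuous dual strategies).} Approximate $c$ from above by bounded continuous $c_n \searrow c$ via truncation $c_n := (c \vee (-n\psi)) \wedge (n\psi)$ composed with an inf-convolution with a Lipschitz kernel. Apply Stage 2 to each $c_n$; then monotone convergence on the attained primal side (Stage 1), the uniform $\psi$-bound, and weak duality give $\po(c) = \lim_n \po(c_n) = \lim_n \ddo(c_n) \geq \ddo(c)$, with the opposite inequality already in Stage 1. To restrict the dual variables to $\varphi_{t,i} \in \Lsp^1(\mu_{t,i}) \cap C(\R;\R)$ and $h_{t,i} \in C_b(\R^{td};\R^d)$ without changing $\ddo$, mollify an $\varepsilon$-optimal pair: $\Lsp^1(\mu_{t,i})$-convergence of the mollified $\varphi_{t,i}$ and pointwise, uniformly bounded convergence of the truncated $h_{t,i}$ preserve the super-hedging inequality up to a small error, with the $\psi$-growth handling the tails. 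The principal obstacle is the interplay of the $\psi$-growth with the Hahn--Banach / Riesz machinery, which lives naturally on $C_b$: the entire extension must be executed with uniform integrability estimates to ensure no mass is lost in passing to unbounded, merely semi-continuous payoffs.
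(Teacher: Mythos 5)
The paper does not actually prove Theorem~\ref{thm:duality}: it records it as a citation, pointing to \cite{Zaev} for the continuous-cost case and to \cite{Vi03,Vi09} for the standard extension to semi-continuous costs, and noting the $d=1$ origin in \cite{BHLP}. Your reconstruction via the sublinear-functional/Hahn--Banach/Riesz scheme of \cite{BHLP}, with the same three-stage decomposition (weak duality and primal attainment, strong duality on $C_b$, then monotone passage to u.s.c.\ $c$ and mollification of the dual strategies), is precisely the strategy of the cited literature, so it matches the paper's intended route rather than providing a genuinely distinct one. One technical caveat in Stage~1: truncating a general measurable $h_{t,i}$ to a bounded one destroys the pointwise superhedging inequality, so ``integrate the truncated inequality'' does not go through. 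The standard fix is the discrete-time local-martingale lemma: set $S := \sum_{t,i} h_{t,i}(x_1,\dots,x_t)(x_{t+1,i}-x_{t,i})$ and observe that $S \geq c - \sum_{t,i}\varphi_{t,i}(x_{t,i})$, so $S^- \in \Lsp^1(\pi)$ because $c^-$ and each $\varphi_{t,i}$ are $\pi$-integrable; a martingale transform under a martingale measure whose negative part is integrable satisfies $\E_\pi[S] \leq 0$, which is exactly the inequality needed (one does not obtain, and does not need, $\E_\pi[S]=0$). With that correction the outline is sound and consistent with Zaev's proof and its standard u.s.c.\ extension.
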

We note that this result was proved in \cite{Zaev} with the assumption of 
continuous cost $c$, but it is standard to extend the duality to the 
semi-continuous costs, see, e.g., \cite{Vi03, Vi09}. We also note that this 
duality for martingale optimal transport was first proved in \cite{BHLP} in one 
dimension $d=1$, and they also showed that the duality holds with a narrower 
class of functions $\varphi_{t,i}$ which are linear combinations of finitely 
many call options, i.e. $\vp_{t,i}$ of the form $c_{t,i}+\sum_{j=i}^{l_{t,i}} 
c_{t,i,j} (x_{t,i}-k_{t,i,j})^+$, for some $l_{\cdot}, c_{\cdot}, k_{\cdot}$. 
The same applies here.

While existence of primal optimizers in \eqref{def:mmot} is easy to obtain, in 
general we cannot hope for uniqueness. We illustrate this with two simple 
examples. In both examples, $d=2=T$ and $c(x)=x_{2,1}x_{2,2}$. We further study 
this particular cost function and present some structural results in Section 
\ref{sec:structure}.

\begin{example}\label{Ex1}
	Consider $d=2=T$ and the maximization problem with $c(x)=x_{2,1}x_{2,2}$. 
	Take $\mu \leqcvx \nu$ such that $\Mc(\mu,\nu)$ is not a singleton, e.g., $\mu,\nu$ are Gaussians with the same mean and increasing variance, and let 
	$\mu_{1,1}=\mu_{1,2}=\mu$, $\mu_{2,1}=\mu_{2,2}=\nu$. Then for any 
	$\tilde\pi \in \Mc(\mu,\nu)$, the distribution $\pi$ of any quadruple of random variables
	$(\xi,\xi,\eta,\eta)$ satisfying $(\xi,\eta)\sim \tilde \pi$ is an element of 
	$\Mc(\mv)$. Further, $\pi \circ X_{2}^{-1}$ is the monotone increasing 
	coupling of $\nu$ with itself and, in particular, is independent of the choice of $\tilde \pi$. It also  
	 attains $\po(\mv)$ and we conclude that the optimizer in $\po(\mv)$ is 
	not unique. Note however that, in this example, the distributions $\pi\circ 
	X_1^{-1}$ and $\pi\circ X_2^{-1}$ are the same for any optimizer $\pi\in 
	\Mc(\mv)$. 
\end{example}

\begin{example} \label{Ex2}
 Consider the same problem as in Example \ref{Ex1} but with 
	$\mu_{1,1} = \delta_0$, $\mu_{2,1} = 
	\frac{1}{4}(\delta_{-2}+\delta_{-1}+\delta_1+\delta_2)$, 
	$\mu_{1,2}=\mu_{2,2} = 
	\frac{1}{2}(\delta_{-1}+\delta_1)$. Note that, for any $\pi\in \Mc(\mv)$, 
	$\pi^1= \pi \circ X_1^{-1} = \frac{1}{2}(\delta_{(0,1)} + \delta_{(0, -1)})$. 
	Further, the following measures dominate $\pi^1$ in the convex order and 
	have 
	$\mu_{2,1}, \mu_{2,2}$ as their marginals:
	\begin{align}
	\pi^2= \frac{1}{4}(\delta_{(-1,1)}+\delta_{(1,1)}+\delta_{(-2,-1)} 
	+\delta_{(2,-1)}), \nonumber \\
	\tilde \pi^2 = \frac{1}{4}(\delta_{(-1,-1)}+\delta_{(1,-1)}+\delta_{(-2,1)} 
	+\delta_{(2,1)}). \nonumber
	\end{align}
	Hence there exist $\pi, \tilde \pi\in \Mc(\mv)$ whose (2-dimensional) marginals are 
	$\pi^1, \pi^2$ and $\pi^1, \tilde \pi^2$ respectively. In particular, 
	$\Mc(\mv)$ is not a singleton. However, for any $\pi\in \Mc(\mv)$, we have
	$$\E_\pi[X_{2,1}X_{2,2}]=\E_{\pi}[X_{2,1}X_{1,2}]=\E_{\pi}[X_{1,1}X_{1,2}]=0,$$
	and hence $\pi$ is an optimizer for both $\pu(\mv)$ and $\po(\mv)$ with 
	$c(x)=x_{2,1}x_{2,2}$. In this example, neither the optimizer nor the 
	implied distribution of $X_2$ are unique. 
\end{example}

\section{Numerical Methods for MMOT problems}\label{sec:num_methods}

We present now two numerical approaches for computing the MMOT value 
\eqref{def:mmot}, as well as the primal and the dual optimizers. Our first 
approach relies on the primal formulation \eqref{def:mmot} and LP methods. 
Our second approach starts with the dual formulation \eqref{def:mmotdual}, uses penalization to convexify the superhedging constraint and employs optimization techniques involving deep neural networks. 

Before we discuss our methods in detail, we mention briefly two other numerical approaches which have been applied to MOT problems and, while not pursued in this paper, could potentially be extended to the MMOT context. The first one is the cutting plane method as described in \cite{phlautomated}. This method is LP based but works via the dual side. A standard LP approach runs into memory issues for higher values of $d$ or $T$, see section \ref{subsec:accuracy}, and the cutting plane method circumvents this by considering a finite set of basis functions. While effective, this introduces a new source of error and one which is difficult to control theoretically and for this reason, see also section \ref{subsec:Num1}, we do not discuss it further in this paper.
The second method is a generalization of the Sinkhorn algorithm \cite{benamou2015iterative,cuturi2013sinkhorn} to the MOT problem, see \cite{de2018entropic}. The Sinkhorn algorithm adds a strictly convex/concave term to the objective function, similarly as our neural network approach presented below. However, the Sinkhorn algorithm is designed for discrete marginals and based on the so-called iterative Bregman projections. In the OT context, the projections are onto the marginal constraints. The added difficulty for MOT problems comes from the additional projection onto the martingale constraint, which does not admit a closed form solution. Instead, the projection has to be approximated numerically, e.g., using Newton's method, which again introduces a new source of error. To the best of our knowledge, even for MOT problems, there are no theoretical results ensuring that the accumulative error does not explode and the number of required projections does not increase with respect to the discretisation precision.

\subsection{The Primal Problem - an LP approach}\label{sec:LPtheory}
Following the approach of \cite{GO}, we propose a computational scheme to solve 
\eqref{def:mmot}. For each $\eps\in\R_+$, denote by $\Mc_{\eps}(\mv)\subset 
\Pi(\mv)$  the subset of measures $\pi$ satisfying
\b*
\E_{\pi}\Big[ \Big| \E_{\pi}\big[X_{t+1}\big|X_1,\ldots, 
X_t\big]~-~X_t\Big|\Big] \q \le \q \eps, \q   \mbox{ for }t=1,\ldots, T-1,
\e*
where $|\cdot|$ stands for the $\ell_1$ norm. Introduce, accordingly, the 
optimization problems as follows:
\[
\po_{\eps}(\mv)\q := \q \sup_{\pi\in \Mc_{\eps}(\mv)}~ \int c \,d\pi, \qquad
\pu_{\eps}(\mv)\q := \q \inf_{\pi\in \Mc_{\eps}(\mv)}~ \int c \,d\pi.
\]
Then clearly $\po_{0}= \po$ (resp. $\pu_{0}= \pu$), and Theorem 
\ref{thm:mot-lp} provides the basis of our numerical method.

\begin{theorem}\label{thm:mot-lp}
	Let $\mv\in  \Pc(\R)^{dT}$ satisfy $\Mc(\mv)\neq\emptyset$ and 
	$\mv^n=\big(\mu_{t,i}^n\big)_{1\leq t\leq T, 1\leq i\leq d}$ satisfy 
	$\lim_{n\to\infty} r_n=0$ with $r_n:=2\max_{1\le t\le T} \sum_{1\leq i \leq 
	d} \Wc_1(\m_{t, i}^n, \m_{t, i})$. Then, for all $n\ge 1$, 
	$\Mc_{r_n}(\mv^n)\neq\emptyset$. Assume further $c$ is Lipschitz, then:
	
	\vspace{1mm}
	
	\no \rmi For any $(\eps_n)_{n\ge 1}$ converging to zero such that 
	$\eps_n\ge r_n$ for all $n\ge 1$, one has 
	\b*
	\lim_{n\to\infty} \po_{\eps_n}(\mv^n)  = \po(\mv) \mbox{ and }
	\lim_{n\to\infty}\pu_{\eps_n}(\mv^n) = \pu(\mv).
	\e*
	\rmii For each $n\ge 1$, $\po_{\eps_n}(\mv^n)$  (resp. 
	$\pu_{\eps_n}(\mv^n)$) admits an optimizer $\pi_n$. The sequence 
	$(\pi_n)_{n\ge 1}$ is tight and every limit point is an optimizer for 
	$\po(\mv)$  (resp. $\pu(\mv)$). In particular, $(\pi_n)_{n\ge 1}$ converges 
	weakly whenever $\po(\mv)$  (resp. $\pu(\mv)$) has a unique optimizer.
\end{theorem}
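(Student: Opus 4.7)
The argument splits into (A) nonemptiness of $\Mc_{r_n}(\mv^n)$ by explicit construction, (B) the lower bound $\liminf_n \po_{\eps_n}(\mv^n) \geq \po(\mv)$, and (C) existence of optimisers at level $n$, tightness, and identification of weak limits as MMOT optimisers (which also yields the matching upper bound). The case of $\pu$ is symmetric.

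For (A) and (B), I fix $\pi^\star \in \Mc(\mv)$ (nonempty by assumption; taking $\pi^\star$ optimal for $\po(\mv)$, whose attainment is granted by Theorem \ref{thm:duality}, is the extra ingredient for (B)) and realise it as the law of $X=(X_{t,i})$. I define $X^n_{t,i} := T^n_{t,i}(X_{t,i})$, where $T^n_{t,i}$ is the monotone rearrangement pushing $\mu_{t,i}$ to $\mu^n_{t,i}$ (the one-dimensional $\Wc_1$-optimal coupling), and set $\pi^n := \operatorname{Law}(X^n) \in \Pi(\mv^n)$. Since $X^n_{t,i}$ is a measurable function of $X_{t,i}$, the filtration $\Gc_t := \sigma(X^n_1,\ldots,X^n_t)$ sits inside $\sigma(X_1,\ldots,X_t)$, and the martingale property of $\pi^\star$ together with the tower rule yield
\[
\E[X^n_{t+1,i}\mid \Gc_t] - X^n_{t,i} \;=\; \E[X_{t,i} - X^n_{t,i}\mid \Gc_t] + \E[X^n_{t+1,i} - X_{t+1,i}\mid \Gc_t].
\]
Taking absolute values, summing over $i$, and applying Jensen bound the $\ell_1$ martingale defect at step $t$ by $\sum_i \Wc_1(\mu_{t,i},\mu^n_{t,i}) + \sum_i \Wc_1(\mu_{t+1,i},\mu^n_{t+1,i}) \leq r_n$, so $\pi^n \in \Mc_{r_n}(\mv^n)$. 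With $L$ the Lipschitz constant of $c$, I then obtain $\bigl|\int c\,d\pi^n - \int c\,d\pi^\star\bigr| \leq L\,\E[\|X^n - X\|_1] = L\sum_{t,i}\Wc_1(\mu_{t,i},\mu^n_{t,i}) \to 0$, which produces (B) once $\pi^\star$ is taken optimal.

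For (C), I select optimisers $\pi_n$ of $\po_{\eps_n}(\mv^n)$. Their existence follows from weak compactness of $\Mc_{\eps_n}(\mv^n)$ (tightness is automatic since the marginals are fixed, and closedness reduces to weak lower semicontinuity of the martingale-defect functional $\Delta_t(\pi) := \E_\pi[|\E_\pi[X_{t+1}\mid X_1,\ldots,X_t] - X_t|]$, addressed below) combined with weak continuity of $\pi \mapsto \int c\,d\pi$ on this set. The sequence $(\pi_n)$ is tight because $\Wc_1$-convergence $\mu^n_{t,i} \to \mu_{t,i}$ gives tightness and uniform integrability of each coordinate. Any subsequential weak limit $\pi$ has the prescribed marginals $\mv$ and satisfies $\Delta_t(\pi) \leq \liminf_k \eps_{n_k} = 0$ by lsc, hence $\pi \in \Mc(\mv)$; uniform integrability (together with the linear growth of the Lipschitz $c$) yields $\int c\,d\pi_{n_k} \to \int c\,d\pi \leq \po(\mv)$, which combined with (B) provides both the limsup bound and the optimality of every limit point for $\po(\mv)$. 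Uniqueness of the MMOT optimiser upgrades subsequential to full weak convergence via the standard subsequence principle.

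The delicate ingredient is the weak lower semicontinuity of $\Delta_t$, since conditional expectations do not behave well under weak limits. I would use the identity
\[
\Delta_t(\pi) \;=\; \sup_{h\in C_b,\,|h|\leq 1}\; \int h(x_1,\ldots,x_t)(x_{t+1} - x_t)\,d\pi,
\]
with the restriction to continuous bounded $h$ justified by an $L^1$-density argument against the conditional law; each term in the supremum is weakly continuous along $(\pi_n)$ because $\Wc_1$-convergence of marginals makes $x_{t+1}-x_t$ uniformly integrable, so $\Delta_t$ is weakly lsc as a supremum of weakly continuous functionals. The same uniform integrability is what enables the passage $\int c\,d\pi_{n_k} \to \int c\,d\pi$ under the weak limit.
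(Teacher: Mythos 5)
Your overall strategy is close to the paper's: couple each one-dimensional marginal pair optimally to transport an element of $\Mc(\mv)$ to an approximate martingale in $\Mc_{r_n}(\mv^n)$, then close the argument via tightness and lower semicontinuity of the martingale-defect functional. You do streamline the structure by bypassing the paper's Proposition~\ref{prop:continuity} (continuity of $\eps\mapsto\po_\eps(\mv)$ at $\eps=0$), since in your part (C) the identification of subsequential weak limits already forces $\limsup_n\po_{\eps_n}(\mv^n)\leq\po(\mv)$. That is a perfectly sound and slightly tidier decomposition, and your treatment of the lsc of $\Delta_t$ via the test-function representation matches the paper's use of \eqref{def:monoto}.

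However, step (A) has a genuine gap. You set $X^n_{t,i}=T^n_{t,i}(X_{t,i})$ with $T^n_{t,i}$ the monotone rearrangement, and the filtration inclusion $\Gc_t=\sigma(X^n_1,\dots,X^n_t)\subseteq\sigma(X_1,\dots,X_t)$ is the load-bearing step of your whole estimate. But a deterministic map pushing $\mu_{t,i}$ to $\mu^n_{t,i}$ and realizing the quantile coupling simply does not exist whenever $\mu_{t,i}$ has an atom that the target must split. This is not an exotic corner case in this setting: the first marginal is typically a Dirac $\mu_{1,i}=\delta_{s_i}$, and the discretization \eqref{eq:DSdiscretisation} maps $\delta_{s_i}$ to a two-point measure $(1-\alpha)\delta_{k/n}+\alpha\delta_{(k+1)/n}$ whenever $s_i$ is off the grid; then $X^n_{1,i}$ cannot be a measurable function of $X_{1,i}$, the inclusion $\Gc_t\subseteq\Fc_t$ fails, and your tower-rule computation no longer applies. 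The same issue contaminates (B), where you use $\E[\,|X^n_{t,i}-X_{t,i}|\,]=\Wc_1(\mu_{t,i},\mu^n_{t,i})$. The paper avoids this precisely by the Skorokhod/disintegration step in the proof of Proposition~\ref{prop:approx}: it realizes the optimal coupling as $V_{t,i}=f_{t,i}(U_{t,i},Z_{t,i})$ with independent auxiliary noise $Z$, and recovers the martingale-defect bound by conditioning on the $Z$'s and exploiting their independence from the $U$'s. You can repair your argument in the same spirit by enlarging the probability space, coupling $(X_{t,i},X^n_{t,i})$ optimally with extra independent uniforms, conditioning on $\sigma(X_1,\dots,X_t,Z_1,\dots,Z_t)$ where the martingale property of $\pi^\star$ still holds by independence, and then projecting down to $\Gc_t$ via Jensen. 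As written, though, the argument does not cover the general $\mv\in\Pc(\R)^{dT}$ in the statement.
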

Before proving this result we state and prove two preliminary propositions.

\begin{proposition}\label{prop:approx} 
	Provided $\mv \in \Pc(\R)^{dT}$ with $\Mc_{\eps}(\mv)\neq\emptyset$, it 
	holds $\Mc_{\eps+r}(\mv')\neq\emptyset$ for all $\mv'\in \Pc(\R)^{dT}$ 
	where $r:=2\max_{1\le t\le T} \sum_{1\le i\le d} \Wc_1(\m_{t, i}', \m_{t, 
	i})$. Further, if $c$ is  $L-$Lipschitz, then 
	\b*
	\po_{\eps}(\mv)  ~\le~ \po_{\eps+r}(\mv') + LTr/2 \quad \text{and} \quad 
	\pu_{\eps}(\mv)  ~\ge~ \pu_{\eps+r}(\mv') - LTr/2.
	\e*
\end{proposition}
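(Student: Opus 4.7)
The plan is to prove both assertions through a single coupling construction: given any $\pi\in\Mc_\eps(\mv)$, I build a neighbour $\pi'\in\Mc_{\eps+r}(\mv')$ coordinate by coordinate using optimal Wasserstein couplings, and then estimate the relaxed-martingale defect and the cost difference in terms of $\Wc_1(\m_{t,i},\m'_{t,i})$. Concretely, for each $(t,i)$, pick an optimal coupling $\g_{t,i}$ of $\m_{t,i}$ and $\m'_{t,i}$, so $\int|x-y|\,d\g_{t,i}(x,y)=\Wc_1(\m_{t,i},\m'_{t,i})$. On an enlarged probability space, draw $X\sim\pi$ and, conditionally on $X$, sample $X'_{t,i}\sim\g_{t,i}(\cd\mid X_{t,i})$ independently across all $(t,i)$ using independent auxiliary noise. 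Let $\hat\pi$ denote the joint law of $(X,X')$ and set $\pi':=\hat\pi\circ(X')^{-1}$. Then $\pi'\in\Pi(\mv')$ by construction, the pair $(X_{t,i},X'_{t,i})$ has law $\g_{t,i}$ under $\hat\pi$, and hence $\E_{\hat\pi}|X'_{t,i}-X_{t,i}|=\Wc_1(\m_{t,i},\m'_{t,i})$.

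The crux is to verify $\pi'\in\Mc_{\eps+r}(\mv')$. Let $\Fc^X_t,\Fc^{X'}_t$ denote the natural filtrations of $X,X'$ in the enlarged space, and set $\Gc_t:=\Fc^X_t\vee\Fc^{X'}_t$. Since each $X'_{s,i}$ with $s\le t$ is a function of $X_{s,i}$ and its own independent noise, the noise up to time $t$ is independent of $(X_{t+1},\ldots,X_T)$, which yields the compatibility identity $\E_{\hat\pi}[X_{t+1}\mid\Gc_t]=\E_\pi[X_{t+1}\mid\Fc^X_t]$. Decomposing $X'_{t+1}-X'_t=(X'_{t+1}-X_{t+1})+(X_{t+1}-X_t)+(X_t-X'_t)$, conditioning on $\Gc_t$, passing to $\ell_1$-absolute values, and integrating (with Jensen applied to the first summand) gives
\[
\E_{\hat\pi}\bigl|\E_{\hat\pi}[X'_{t+1}-X'_t\mid\Gc_t]\bigr|\le \sum_{i=1}^d\Wc_1(\m_{t+1,i},\m'_{t+1,i})+\eps+\sum_{i=1}^d\Wc_1(\m_{t,i},\m'_{t,i})\le \eps+r,
\]
using $\pi\in\Mc_\eps(\mv)$ for the middle term (via the compatibility identity) and $\sum_i\Wc_1(\m_{s,i},\m'_{s,i})\le r/2$ for the outer ones. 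A further Jensen step, using $\Fc^{X'}_t\subset\Gc_t$, replaces $\Gc_t$ by $\Fc^{X'}_t$ without weakening the bound, proving $\pi'\in\Mc_{\eps+r}(\mv')$ and hence the non-emptiness claim.

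For the cost bounds, $L$-Lipschitz continuity of $c$ (in the $\ell_1$ norm) gives $\bigl|\int c\,d\pi-\int c\,d\pi'\bigr|\le L\,\E_{\hat\pi}|X-X'|=L\sum_{t,i}\Wc_1(\m_{t,i},\m'_{t,i})\le LTr/2$, so $\int c\,d\pi\le \po_{\eps+r}(\mv')+LTr/2$; taking the supremum over $\pi\in\Mc_\eps(\mv)$ yields the $\po$ inequality, and a mirror argument applied to an infimising sequence yields the $\pu$ one. The principal difficulty is the martingale step: the coupling of $X$ and $X'$ must be chosen so that observing $X'$ up to time $t$ does not reveal future $X$-information beyond $\Fc^X_t$, and this is exactly what the coordinate-wise independent-noise construction achieves via the compatibility identity. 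Without this compatibility, the $\eps$ bound on the middle summand would not be available and the martingale defect of $\pi$ would not transfer to $\pi'$ with only an additive $r$ correction.
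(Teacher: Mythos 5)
Your proof is correct and follows essentially the same route as the paper: build $\pi'$ from $\pi$ by coupling each marginal to its Wasserstein-optimal partner using independent auxiliary noise (the paper formalizes your "independent auxiliary noise" via Skorokhod's representation plus a disintegration lemma from \cite{GO}), split $X'_{t+1}-X'_t$ into the same three pieces, and use the independence of the noise from $X$ to make the middle piece inherit the $\eps$-martingale defect of $\pi$. One minor exposition slip: you justify the compatibility identity $\E_{\hat\pi}[X_{t+1}\mid\Gc_t]=\E_\pi[X_{t+1}\mid\Fc^X_t]$ by saying "the noise up to time $t$ is independent of $(X_{t+1},\ldots,X_T)$"; that unconditional independence alone is not sufficient --- what you actually need (and what your construction provides) is that the noise is independent of the entire vector $X$, so that it is conditionally independent of $X_{t+1}$ given $(X_1,\ldots,X_t)$, which then yields the identity by the tower property.
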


\begin{proof}
	Without loss of generality, we only show the first inequality. Fix an 
	arbitrary $\pi\in \Mc_{\eps}(\mv)$. It follows from Skorokhod's theorem 
	that, there exists an enlarged probability space $(E , \Ec , \Qc)$ which 
	supports random variables $U_t=(U_{t,1},\ldots, U_{t, d})$, 
	$Z_t=(Z_{t,1},\ldots, Z_{t,d})$ taking values in $\R^d$, for $t=1,\ldots, 
	T$, such that
	\begin{align}\label{eq:Skoro_conditions}
	\begin{split}
	~~~ &\text{$\bullet$~$\Qc\circ (U_1, ..., U_T)^{-1} =\pi$ and $\Qc\circ 
	Z_t^{-1} =  \Nc_d$ for $t=1,\ldots, T$,} \\ 
	~~~ &\text{where $\Nc_d$ denotes the standard normal distribution on 
	$\R^d$.} \\
	~~~ &\text{$\bullet$~$(U_1,\ldots, U_T)$ and $(Z_1,\ldots, Z_T)$ are 
	independent.}
	\end{split}
	\end{align}
	Let $t \in \{ 1, \ldots, T-1\}$. For $i=1,\ldots, d$, let $\gamma_{t, i}$ 
	be the optimal transport plan realizing the Wasserstein distance 
	$\Wc_1(\m_{t, i},\m_{t, i}')$. Using standard disintegration techniques 
	(see \cite[Lemma A.1]{GO}), there exist measurable functions $f_{t,i}: 
	\R^2\to\R$ such that $\Qc\circ (U_{t,i}, V_{t,i})^{-1}= \gamma_{t,i}$ with  
	$V_{t,i}:= f_{t,i}(U_{t,i}, Z_{t,i})$.  Let $V_t:=(V_{t,1},\ldots, 
	V_{t,d})$. Then, for all $h=(h_i)_{1\le i\le 
	d}\in\Cc_{b}\big(\Xc_1\times\cdots\times \Xc_t; \R^d\big)$, one has
	\b*
	&&\E_{\Qc}\big[h(V_1,\ldots, V_t) \cdot 
	(V_{t+1}-V_t)\big]~=~\E_{\Qc}\left[\sum_{i=1}^d h_i(V_1,\ldots, 
	V_t)(V_{t+1,i}-V_{t,i})\right] \\
	&&=~ \E_{\Qc}\left[\sum_{i=1}^d h_i(V_1,\ldots, 
	V_t)(V_{t+1,i}-U_{t+1,i})\right]~+~\E_{\Qc}\left[\sum_{i=1}^d 
	h_i(V_1,\ldots, V_t)(U_{t+1,i}-U_{t,i})\right] \\
	&&~~+~  \E_{\Qc}\left[\sum_{i=1}^d h_i(V_1,\ldots, 
	V_t)(U_{t,i}-V_{t,i})\right] \\
	&&\le ~ r\|h\|_{\infty}~+~\E_{\Qc}\left[\sum_{i=1}^d 
	h_i\big(f_{s,i}(U_{s,i}, Z_{s,i});1\le s\le t, 1\le i\le d\big) 
	(U_{t+1,i}-U_{t,i})\right] \\
	&&\le~ (\eps+r)\|h\|_{\infty},
	\e*
	where the last inequality follows from the conditions in 
	\eqref{eq:Skoro_conditions}. Therefore, 
	\be\label{def:monoto}
	\int h(x_1,\ldots, x_t)\cdot (x_{t+1}-x_t)\pi'(dx) \q \le\q 
	(\eps+r)\|h\|_{\infty}
	\ee
	holds for all $h\in\Cc_{b}\big(\Xc_1\times\cdots\times\Xc_t;\R^d\big)$, 
	where $\pi':=\Qc\circ (V_1,\ldots, V_T)^{-1}$. In view of the monotone 
	class theorem, this is equivalent to  
	\b*
	\E_{\pi'}\Big[ \Big| \E_{\pi'}\big[X_{t+1}\big|X_1,\ldots, 
	X_t\big]~-~X_t\Big|\Big] \q \le \q \eps+r.
	\e*
	Hence, $\pi'\in \Mc_{\eps+r}(\mv')\neq\emptyset$  as $\pi'\circ 
	X_{t,i}^{-1}=\m_{t,i}'$ for $t=1,\ldots, T$ and $i=1,\ldots, d$. To 
	conclude the proof, notice that 
	\b*
	\int cd\pi~-~\po_{\eps+r}(\mv') &\le&  \int cd\pi~-~\int cd\pi' ~~=~~ \E_{\Qc}\big[c(U_1,\ldots, 
	U_T)-c(V_1,\ldots, V_T)\big] \\
	&\le& L\sum_{t=1}^T\sum_{i=1}^d \E_{\Qc} \big[|U_{t,i}-V_{t,i}|\big] 
	~~\le~~ LTr/2,
	\e*
	which yields $\po_{\eps}(\mv)\le \po_{\eps+r}(\mv')+LTr/2$ 
	as $\pi\in \Mc_{\eps}(\mv)$ is arbitrary. 
\end{proof}

\begin{proposition}\label{prop:continuity}
	Assume that $c$ has a linear growth and $\Mc(\mv)\neq\emptyset$.
	
	\vspace{1mm}
	
	\no \rmi If $c$ is u.s.c., then the map $
	\R_+\ni\eps\mapsto \po_{\eps}(\mv)\in \R$ 
	is non-decreasing, continuous and concave.
	
	\vspace{1mm}
	
	\no \rmii If $c$ is l.s.c., then the map $
	\R_+\ni\eps\mapsto \pu_{\eps}(\mv)\in \R$ 
	is non-increasing, continuous and convex.
\end{proposition}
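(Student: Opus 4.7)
The plan is to prove (i); the proof of (ii) is obtained by swapping $\sup$ and $\inf$ throughout and replacing u.s.c.\ by l.s.c. Monotonicity is immediate since $\eps_1 \le \eps_2$ gives $\Mc_{\eps_1}(\mv) \subseteq \Mc_{\eps_2}(\mv)$. Finiteness of $\po_\eps(\mv)$ on $[0,\infty)$ is ensured by the linear growth of $c$ together with the fact that every $\pi \in \Pi(\mv) \supseteq \Mc_\eps(\mv)$ has the prescribed marginals, hence uniformly bounded first moments. Since $\Mc(\mv) \subseteq \Mc_\eps(\mv)$ is nonempty, the supremum is taken over a nonempty set.

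For concavity, the key observation is that the defining functional
$$E_t(\pi) := \E_\pi\bigl[\bigl|\E_\pi[X_{t+1}\mid X_1,\ldots,X_t] - X_t\bigr|\bigr]$$
admits the dual representation
$$E_t(\pi) = \sup\left\{\int h(x_1,\ldots,x_t)\cdot(x_{t+1}-x_t)\,d\pi \,:\, h \in C_b(\Xc_1\times\cdots\times\Xc_t;\R^d),\ \|h\|_\infty \le 1\right\},$$
exactly as in the derivation leading to \eqref{def:monoto}. This displays $E_t$ as a supremum of linear functionals of $\pi$, hence $E_t$ is convex on $\Pi(\mv)$. Given $\pi_i \in \Mc_{\eps_i}(\mv)$ for $i=1,2$ and $\lambda \in [0,1]$, the mixture $\pi := \lambda\pi_1 + (1-\lambda)\pi_2$ lies in $\Pi(\mv)$ and satisfies $E_t(\pi) \le \lambda\eps_1 + (1-\lambda)\eps_2$ for every $t$, so $\pi \in \Mc_{\lambda\eps_1 + (1-\lambda)\eps_2}(\mv)$. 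Combining this with the linearity of $\pi \mapsto \int c\,d\pi$ and a standard near-optimizer argument yields concavity of $\eps \mapsto \po_\eps(\mv)$.

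Continuity on $(0,\infty)$ then follows from the standard fact that a finite concave function on an open interval of $\R$ is automatically continuous, so only continuity at $\eps=0$ remains. Monotonicity gives $\po(\mv) \le \lim_{\eps\downarrow 0}\po_\eps(\mv)$; for the reverse inequality, pick $\eps_n \downarrow 0$ and $\pi_n \in \Mc_{\eps_n}(\mv)$ with $\int c\,d\pi_n \to \limsup_{\eps\downarrow 0}\po_\eps(\mv)$. Since all $\pi_n$ share the marginals $\mv$, the family is tight and moreover each coordinate $X_{t,i}$ is uniformly integrable under $(\pi_n)$. Extract a subsequence $\pi_{n_k}$ converging weakly to some $\pi^* \in \Pi(\mv)$. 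For every $h \in C_b$ with $\|h\|_\infty \le 1$, the approximate martingale inequality
$$\int h(x_1,\ldots,x_t)\cdot(x_{t+1}-x_t)\,d\pi_{n_k} \le \eps_{n_k}$$
passes to the limit because the integrand is dominated by $|X_{t+1}|+|X_t|$, which is uniformly integrable under $(\pi_n)$. Replacing $h$ by $-h$ shows $\pi^*$ is a martingale, hence $\pi^* \in \Mc(\mv)$. Finally, upper semicontinuity and linear growth of $c$, combined once more with uniform integrability, give $\limsup_k \int c\,d\pi_{n_k} \le \int c\,d\pi^* \le \po(\mv)$, closing the argument.

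The hard part throughout the continuity-at-zero step is the unboundedness of the relevant integrands: both the martingale-defect tester $h(x_1,\ldots,x_t)\cdot(x_{t+1}-x_t)$ and the cost $c$ grow linearly rather than being bounded, so plain weak convergence of $(\pi_n)$ does not by itself yield convergence of the integrals. The saving feature is that the marginals of $\pi_n$ are fixed, so the coordinate functions are uniformly integrable, which upgrades weak convergence to the convergence (or, for u.s.c.\ $c$, the $\limsup$ inequality) required for functions of linear growth.
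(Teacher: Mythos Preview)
Your proof is correct and follows essentially the same route as the paper: monotonicity by set inclusion, concavity from the fact that mixtures of elements of $\Mc_{\eps_1}(\mv)$ and $\Mc_{\eps_2}(\mv)$ lie in $\Mc_{\lambda\eps_1+(1-\lambda)\eps_2}(\mv)$, continuity on $(0,\infty)$ from concavity, and right-continuity at zero via tightness plus uniform integrability (the paper phrases the latter as $\Wc_1$-convergence, which is equivalent). You supply more detail than the paper on why the mixture satisfies the relaxed martingale constraint, via the dual representation of $E_t$ as a supremum of linear functionals, but this is elaboration rather than a different approach.
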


\begin{proof}
	We only show {\rm (i)} here. First notice that $\eps\mapsto  
	\po_{\eps}(\mv)$ is non-decreasing by definition. Next, let us prove the 
	concavity. Given $\eps$, $\eps'\in\R_+$ and $\alpha\in [0,1]$, it remains 
	to show $(1-\alpha) \po_{\eps}(\mv)+\alpha \po_{\eps'}(\mv) \le 
	\po_{\eps_{\alpha}}(\mv)$, where  
	$\eps_{\alpha}:=(1-\alpha)\eps+\alpha\eps'$. This indeed follows from the 
	fact that $(1-\alpha)\pi+\alpha\pi'\in \Mc_{\eps_{\alpha}}(\mv)$ for all 
	$\pi\in \Mc_{\eps}(\mv)$ and $\pi'\in \Mc_{\eps'}(\mv)$. Hence the map 
	restricted to $(0,+\infty)$ is continuous. Finally, let us show the right 
	continuity at zero. For any sequence $(\eps_n)_{n\ge 1}\subset\R_+$ 
	decreasing to zero, let $(\pi_n)_{n\ge 1}$ be a sequence such that 
	$\pi_n\in \Mc_{\eps_n}(\mv)$ for $n\ge 1$ and $\lim_{n\to\infty}  
	\po_{\eps_n}(\mv)  = \lim_{n\to\infty} \int c d\pi_n$. {We have 
	    \b* 
    \lim_{R\to\infty} \sup_{n\ge 1}\pi_n\left(\big(\R \setminus [-R,R]\big)^{Td}\right) \leq \lim_{R\to\infty} Td\left(\sup_{t\leq T, i\leq d}\mu_{t,i}\big(\R \setminus [-R,R]\big)\right) &=& 0,
    \e* 
    which shows that $(\pi_n)_{n\ge 1}$ is tight and hence, by Prokhorov's theorem, admits a 
	weakly convergent subsequence $(\pi_{n_k})_{k\ge 1}$. As the marginals are fixed and have finite first moments, we see that the convergence holds in $\Wc_1$ and that the limit $\pi\in 
	\Mc(\mv)$.
	This implies, thanks to our assumptions on $c$, that  }
	\b*
	\lim_{n\to\infty}\po_{\eps_n}(\mv)~ =~ 
	\lim_{k\to\infty}\po_{\eps_{n_k}}(\mv)~ \leq ~  \po(\mv).
	\e*
	Combined with the obvious reverse inequality this yields the right 
	continuity at zero. 
\end{proof}

\begin{proof}[Proof of Theorem \ref{thm:mot-lp}]
	\rmi It suffices to deal with the maximization problem. First, by 
	Proposition 
	\ref{prop:approx}, we have $\emptyset \neq\Mc_{r_n}(\mv^n)\sb \Mc_{\eps_n}(\mv^n)$ and further
	\b*
	\po(\mv) ~\le~ \po_{r_n}(\mv^n)+LT r_n/2 ~\le~ \po_{\eps_n}(\mv^n) + 
	LT\eps_n/2,
	\e*
	where $L$ denotes the Lipschitz constant of $c$.  Repeating the above 
	reasoning  but interchanging $\mv$ and $\mv^n$, we obtain $
	\po_{\eps_n}(\mv^n) \le \po_{2\eps_n}(\mv) + LT\eps_n/2$, which yields 
	finally
	\b*
	-LT\eps_n/2  \le \po_{\eps_n}(\mv^n)- \po(\mv) \le 
	\left(\po_{2\eps_n}(\mv)-\po(\mv)\right) + LT\eps_n/2.
	\e*
	This result then follows by Proposition \ref{prop:continuity}.
	
	\vspace{1mm}
	
	\no\rmii Arguments in the proof of Proposition \ref{prop:continuity} above show that $\Mc_{\eps_n}(\mv^n)$ is compact. Combined with the Lipschitz continuity of $c$, this yields the existence of $\pi_n$. To show tightness of $(\pi_n)_{n\ge 1}$, let $\eps >0$ and observe that $r_n\to 0$ implies that $\mu^n_{t,i} \to \mu_{t,i}$ in $\Wc_1$ for all $t\leq T$, $i\leq d$ and hence there exists $N$ such that for every $n \ge N$,
	\begin{align*}
	\pi_n\left(\big(\R \setminus [-R,R]\big)^{Td}\right) &\le \sum_{t \le T, i \le d} \mu^n_{t,i}\big(\R \setminus [-R,R]\big) \\
	& \le \sum_{t \le T, i \le d} \mu_{t,i}\big(\R \setminus [-R+1,R-1]\big) + \eps/2. 	\end{align*}
Hence, we can take $R_\eps$ large enough so that 
		\begin{align*}
	\sup_{n \ge 1} \pi_n\left(\big(\R \setminus 
	[-R_\eps,R_\eps]\big)^{Td}\right) \le \eps.
	\end{align*}
	Thus $(\pi_n)_{n\ge 1}$ is tight and hence, by Prokhorov's theorem, admits 
	a weakly convergent subsequence $(\pi_{n_k})_{k\ge 1}$ with a limit denoted by 
	$\pi$. Further, again since $r_n\to 0$, the first moments converge so that $\pi_n\to \pi$ in $\Wc_1$. 
	Using the alternative  definition \eqref{def:monoto} and the 
	dominated convergence theorem, we see that $\pi\in \Mc(\mv)$.  
\end{proof}

The above discussion and Theorem \ref{thm:mot-lp} rely on having a sequence of 
discrete measures $\mv^n=\big(\mv_{t,i}^n\big)$ converging to $\mv$. As each 
$\mu_{t,i}$ is a probability measure on $\R$, its discretisation is a well 
studied subject. For the sake of simplicity, we write $\mu\equiv \mu_{t,i}$ in 
the rest of this section. Suppose first that $\mu$ is given via its density or 
its CDF, or an equivalent functional representation. We could then follow the 
abstract approach in \cite[Section 3.1]{GO}, noting that for $d=1$ the first 
step (\emph{Truncation}) can be simplified to take $\mu_R(dx) := 
\mathds{1}_{B_R}(x)\mu(dx)/\mu[B_R]$, where $B_R=[-R,R]$. 

However, more explicit methods are possible. One such discretisation was 
proposed in \cite{DolinskySoner:2014} and corresponds to taking $\mu^n$ 
supported on $\{k/n\}_{k\in\Z}$: 
\begin{equation}\label{eq:DSdiscretisation}
\mu^n\left(\left\{\frac{k}{n}\right\}\right):= \int_{[(k-1)/n, (k+1)/n)} 
\left(1-|nx-k|\right)\mu(dx),\quad k\in \Z. 
\end{equation}
The construction has a natural interpretation in the potential-theoretic 
language, see \cite{Chacon:77}, namely $\mu^n$ is the probability measure whose 
potential agrees with that of $\mu$ on $\{k/n\}_{k\in\Z}$ and is linear 
otherwise. This implies, in particular, that 
the discretisation preserves the convex order: if $\mu\leqcvx\nu$ then 
$\mu^n\leqcvx\nu^n$. Note also that  for any measurable function $f:\R\to\R$, 
it holds
\b*
\int_{\R}f(x)\mu^n(dx) &=&\int_{\R}f_n(x)\mu(dx),
\e* 
where $f_n(x):=(1+\lfloor nx\rfloor -nx) f\left(\lfloor 
nx\rfloor/n\right)+(nx-\lfloor nx\rfloor) f\left((1+\lfloor 
nx\rfloor)/n\right)$. One has thus by the dual formulation that 
$\Wc_1(\mu^n,\mu)\le 1/n$. Further, a straightforward computation yields 
\b*
&&\int_{[(k-1)/n, (k+1)/n)}\left(1-|nx-k|\right)\mu(dx)\\ 
&&= 
n\int_{\R}\left(\left(x-\frac{k-1}{n}\right)^++\left(x-\frac{k+1}{n}\right)^+-2\left(x-\frac{k}{n}\right)^+\right)\mu(dx)
 \\
&&= 
n\left(C_{\mu}\left(\frac{k-1}{n}\right)+C_{\mu}\left(\frac{k+1}{n}\right)-2C_{\mu}\left(\frac{k}{n}\right)\right),
\e*  
where $C_\mu(K)= \int_{\R}(x-K)^+\mu(dx)$ are the call prices encoded by $\mu$. We note that other discretisations, similar in spirit to \eqref{eq:DSdiscretisation} but distinct, are possible, see for example the $U$-quantisation in \cite{Baker:12}. 

The above discussion assumed we knew $\mu$ through its density or distribution 
function, or similar. If instead we are able to simulate i.i.d.\ random 
variables $(\xi_i)$ from $\mu$ then it is natural to approximate $\mu$ 
using the empirical measures $\hat{\mu}^n=\frac{1}{n}\sum_{k=1}^n 
\delta_{\xi_i}$ constructed from the samples. The distance 
$\Wc_1(\hat{\mu}^n,\mu)$ can be bounded relying on the results of 
\cite{fournier2015rate}, we refer to \cite{GO} for the details. We note that 
such approximations may not preserve the convex order. In light of Theorem 
\ref{thm:mot-lp}, this is not an issue for our methods but one may further 
consider $\Wc_1$-projections onto couples which are in convex order, see 
\cite{ACJ:19} for details. 

Finally, let us comment on the issue of convergence rates in Theorem \ref{thm:mot-lp}. For $d=1$ and $T=2$ such rates were obtained in \cite{GO} but, at present, remain open in greater generality. To obtain an estimation of the convergence rate, we 
    need not only to know the continuity of  $\mv\mapsto\overline{P}(\mv)$ -- 
    this has been settled for $d=1$ recently but remains open otherwise, see \cite{BVP:19,Wiesel:19} -- but also the differentiability (Lipschitz continuity) of 
    $\mv\mapsto\overline{P}(\mv)$, see \cite{GO}. Nevertheless, we hope 
    this may be achievable in the future and it is one of the reasons to 
    consider the LP approach.
    
\subsection{The Dual Problem - a Neural Network approach}
\label{sec:NNtheory}
We develop now a computational approach to the MMOT problem \eqref{def:mmot} 
based on a neural network implementation of the dual formulation 
\eqref{def:mmotdual}.
The basic idea, following the work of \cite{Eckstein2019} for the MOT problem, 
is to restrict $\varphi_{t,i}$, $h_{t,i}$ to neural network functions 
instead of arbitrary $\Lsp^1$ or $\Lsp^0$ functions. 
Without loss of generality, we restrict the discussion to the problem 
$\po=\ddo$.

Formally, we define 
\begin{align*}
\mathcal{H} := \Big\{ &h \in \Lsp^0(\Xc) : \exists (\varphi_{t, i}, h_{t, i}) 
\in \Do \text{ s.t.~for all } x \in \Xc\\
& h(x) = \sum_{t=1}^T \sum_{i=1}^d \varphi_{ t,i}(x_{t,i}) + \sum_{t=1}^{T-1} 
\sum_{i=1}^{d} h_{t,i }(x_1, ..., x_t) (x_{t+1, i} - x_{t, i}).
\Big\}
\end{align*}
Note that, for brevity, $h$ now denotes the combined payoff from dual elements $(\vp_{t, i},h_{t, i})$. For an arbitrary $\mu_0 \in {\mathcal M}(\mv)$ one can rewrite 
\[\ddo(\mv) = \inf_{h\in \mathcal{H}:\, h \geq c} \int h \,d\mu_0, \]
where the value $\ddo(\mv)$ clearly does not depend on the choice of $\mu_0$. We denote by $\mathfrak{N}_{l, k, m}$
the set of feed-forward neural network functions mapping $\mathbb{R}^k$ into 
$\mathbb{R}$, with $l$ layers and hidden dimension $m$. More precisely, we fix 
an activation function $\psi : \mathbb{R} \rightarrow \mathbb{R}$ and define
\begin{align*}
\mathfrak{N}_{l, k, m} = \{ f : \mathbb{R}^k \rightarrow \mathbb{R} : &\text{ 
There exist affine transformations } A_0, ..., A_l \text{ such that } \\ &f(x) 
= A_l \circ \psi \circ A_{l-1} \circ ... \circ \psi \circ A_0(x)\}
\end{align*}
whereby the index $m$ specifies that $A_0$ maps from $\mathbb{R}^k$ to 
$\mathbb{R}^m$, $A_1, ..., A_{l-1}$ map from $\mathbb{R}^m$ to $\mathbb{R}^m$ 
and $A_l$ maps from $\mathbb{R}^m$ to $\mathbb{R}$. The evaluation of $\psi(x)$ 
for $x \in \mathbb{R}^d$ (for some $d \in \mathbb{N}$) is understood point-wise, i.e.~$\psi(x) = (\psi(x_1), 
..., \psi(x_d))$.

Fix $l \in \mathbb{N}$ and define $\Do^m \subset \Do$ as the set of functions 
$(\varphi_{t, i}, h_{t, i})$ with $\varphi_{t, i} \in \mathfrak{N}_{l, 1, m}$ 
and $h_{t, i} \in \mathfrak{N}_{l, d \cdot t, m}$. Similarly, $\mathcal{H}^m 
\subseteq \mathcal{H}$ is defined by
\begin{align*}
\mathcal{H}^m := \Big\{ &h \in \Lsp^0(\Xc) : \exists (\varphi_{t, i}, h_{t, i}) 
\in \Do^m \text{ s.t.~for all } x \in \Xc\\
&h(x) = \sum_{t=1}^T \sum_{i=1}^d \varphi_{t,i}(x_{i, t}) + \sum_{t=1}^{T-1} 
\sum_{i=1}^{d} h_{t,i}(x_1, ..., x_t) (x_{t+1, i} - x_{t, i})\Big\}
\end{align*}
which leads to the problem 
\[
\ddo^m(\mv) := \inf_{h \in \mathcal{H}^m:\,h \geq c} \int h \,d\mu_0.
\]
Aside from the point-wise inequality constraint $h\geq c$, the problem 
$\ddo^m(\mv)$ fits into the standard framework of optimization problems 
for neural networks. This leads us to consider penalizing the inequality constraint. To do so, 
choose a penalty function $\beta: \mathbb{R} \rightarrow \mathbb{R}_+$ which is strictly  
increasing, convex and differentiable on $(0,\infty)$ with $\frac{\beta(x)}{x} 
\rightarrow \infty$ for $x\rightarrow \infty$. Define $\beta_\gamma : 
\mathbb{R} \rightarrow \mathbb{R}_+$ by $\beta_\gamma(x) := \frac{1}{\gamma} 
\beta(\gamma x)$. Further, choose a measure $\theta \in \mathcal{P}(\Xc)$. The 
penalized problem which can be solved numerically is given by
\[
\ddo^m_{\theta,\gamma}(\mv) := \inf_{h \in \mathcal{H}^m} \int h \,d\mu_0 + 
\int \beta_{\gamma}(c-h) \,d\theta.
\]
The penalization used for the Sinkhorn algorithm \cite{cuturi2013sinkhorn} corresponds to the choice $\beta(x) = \exp(x-1)$, while similar penalization methods for neural network based approaches usually utilize a power-type penalization, see also \cite{gulrajani2017improved,seguy2018large}. In our case, for instance, $\beta(x) = \max\{0, x\}^2$ will be used. It follows from Theorem \ref{thm:duality} and \cite[Lemma 3.3.~and Proposition 
3.7]{Eckstein2019} that this problem approximates $\ddo(\mv)$ in the following 
sense:
\begin{theorem}\label{thm:cnv_nn}
	Assume that $c$ is continuous and all marginals $\mu_{t, i}$ are compactly 
	supported:  $\mu_{t, i}([-M, M])=1$ for some $M > 0$ and all $1\leq t\leq 
	T$, $1\leq i\leq d$. For the neural networks, the activation function is continuous, nondecreasing, bounded and nonconstant, and there is at least one hidden layer.
	Consider $\ddo^m(\mv)$ as defined above but with the inequality constraint 
	restricted to $[-M, M]^{T\times d}$. Then
	\begin{align}
	\ddo^m(\mv) &\rightarrow \ddo(\mv) & \text{for } m \rightarrow \infty
	\end{align}
	and if the support of $\theta$ is equal to $[-M, M]^{T\times d}$ then also
	\begin{align} 
	\ddo^m_{\theta, \gamma}(\mv) &\rightarrow \ddo^m(\mv) & \text{for } \gamma 
	\rightarrow \infty.
	\end{align}
\end{theorem}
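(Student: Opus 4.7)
I plan to treat the two limits separately, adapting the universal-approximation and penalization arguments of \cite{Eckstein2019} (Lemma 3.3 and Proposition 3.7) to the multi-marginal martingale setup. In both steps Theorem \ref{thm:duality} is used to restrict attention to continuous dual elements, bypassing the genuinely $\Lsp^1$ or $\Lsp^0$ part of $\Do$.

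\textbf{Step 1: $\ddo^m(\mv)\to\ddo(\mv)$.} The bound $\ddo^m(\mv)\ge\ddo(\mv)$ is immediate, since $\mathcal{H}^m\subseteq\mathcal{H}$ (extending NN functions from the cube $[-M,M]^k$ to the whole space). For the reverse, I fix $\eps>0$ and, by Theorem \ref{thm:duality}, an $\eps$-optimal tuple $(\vp_{t,i}^{*},h_{t,i}^{*})\in\Do$ with $\vp_{t,i}^{*}\in C(\R;\R)\cap\Lsp^1(\mu_{t,i})$ and $h_{t,i}^{*}\in C_b$. Since the activation $\psi$ is continuous, bounded, nondecreasing and nonconstant, Hornik's universal approximation theorem applies: for any $\delta>0$, provided $m$ is large enough, there exist $(\vp_{t,i}^{(m)},h_{t,i}^{(m)})\in\Do^m$ approximating the targets uniformly within $\delta$ on $[-M,M]$ and $[-M,M]^{td}$ respectively. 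The resulting NN payoff $h^{(m)}$ satisfies $h^{(m)}\ge c-C\delta$ on $[-M,M]^{Td}$, where $C$ collects the dimension count together with the factor $2M$ bounding the martingale increments $x_{t+1,i}-x_{t,i}$. Promoting $\vp_{1,1}^{(m)}$ by the constant $C\delta$ (absorbed into the last affine layer's bias) restores feasibility and inflates the objective by at most $(C+Td)\delta$. Letting $\delta\to0$ and then $\eps\to0$ gives the claim.

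\textbf{Step 2: $\ddo^m_{\theta,\gamma}(\mv)\to\ddo^m(\mv)$.} The upper bound is easy: for any feasible $h$, $c-h\le0$ on $\supp\theta=[-M,M]^{Td}$, and $\beta$ nondecreasing yields $\beta_\gamma(c-h)\le\beta(0)/\gamma$ pointwise, so $\ddo^m_{\theta,\gamma}(\mv)\le\ddo^m(\mv)+\beta(0)/\gamma$. For the lower bound I take near-minimizers $h_\gamma$ of the penalized problem; boundedness of $\int\beta_\gamma(c-h_\gamma)\,d\theta$ combined with the superlinear growth $\beta(x)/x\to\infty$ forces $\theta(\{c>h_\gamma+\delta\})\to0$ for every $\delta>0$, since a set of $\theta$-mass $\alpha>0$ on which $c-h_\gamma\ge\delta$ would produce $\int\beta_\gamma(c-h_\gamma)\,d\theta\ge\alpha\,\beta(\gamma\delta)/\gamma\to\infty$. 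To promote this $\theta$-decay into a uniform bound $\sup(c-h_\gamma)^+\to0$, I use continuity of $c$ and of the NN maps $h_\gamma$ together with $\supp\theta=[-M,M]^{Td}$: any pointwise excess $(c-h_\gamma)(x_0)>\delta$ propagates by continuity to a full neighborhood of $x_0$ of positive $\theta$-mass, contradicting the decay just shown. Once $\sup(c-h_\gamma)^+\to0$ is in hand, shifting $h_\gamma$ by this sup yields a feasible element with value exceeding $\int h_\gamma\,d\mu_0$ by $o(1)$, closing the inequality.

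\textbf{Main obstacle.} The delicate point is the propagation from $\theta$-negligibility to a uniform bound in Step 2, which morally requires equicontinuity of $\{h_\gamma\}$ on $[-M,M]^{Td}$ with a modulus independent of $\gamma$. Since the architecture $\mathcal{H}^m$ is finite-dimensionally parametrized, I would first establish boundedness of the NN parameters along the minimizing sequence via coercivity of $h\mapsto\int h\,d\mu_0+\int\beta_\gamma(c-h)\,d\theta$ (using the standing upper bound and the linear growth of $\int h\,d\mu_0$ in the output bias), after which boundedness of parameters yields a uniform modulus of continuity on the compact cube through standard Lipschitz estimates for feed-forward networks. A cleaner alternative is to translate the setting into that of Proposition 3.7 of \cite{Eckstein2019} and apply their argument verbatim, since the martingale constraint plays no role in this second step.
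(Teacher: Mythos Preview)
Your approach is essentially the same as the paper's: the paper does not give a detailed proof at all but simply states that the result follows from Theorem \ref{thm:duality} together with \cite[Lemma 3.3 and Proposition 3.7]{Eckstein2019}, which is exactly the route you take. Your sketch of how those arguments adapt is accurate, and your ``cleaner alternative'' in Step 2---applying Proposition 3.7 of \cite{Eckstein2019} verbatim since the martingale constraint is irrelevant for fixed $m$---is precisely what the paper intends.
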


\begin{remark}
	The penalization of the inequality constraint has the added benefit that it 
	introduces a functional relation between dual and primal optimizers. Thus 
	in practice, one can easily obtain approximate primal optimizers from the 
	obtained neural network solutions. Formally, the problem 
	\[
	\ddo_{\theta,\gamma}(\mv) := \inf_{h \in \mathcal{H}} \int h \,d\mu_0 + 
	\int \beta_{\gamma}(c-h) \,d\theta
	\]
	has a primal problem of the form
	\[
	\po_{\theta, \gamma}(\mv) = \sup_{\pi\in \Mc(\mv)}~ \int c \,d\pi - \int 
	\beta_{\gamma}^*\Big(\frac{d\pi}{d\theta}\Big) \,d\theta.
	\]
	Here, $\beta_{\gamma}^*$ is the convex conjugate of $\beta_{\gamma}$ and 
	the Radon-Nikodym derivative $\frac{d\pi}{d\theta}$ is understood to be 
	infinite if $\pi$ is not absolutely continuous with respect to $\theta$.
	Then under the assumptions of Theorem \ref{thm:cnv_nn}, 
	any optimizer $\hat{h}_\gamma$ of $\ddo_{\theta,\gamma}(\mv)$ yields an optimizer 
	$\hat{\pi}_{\gamma}$ of $\po_{\theta, \gamma}(\mv)$ via
	\begin{align}
	\label{primal_formula}
	\frac{d\hat{\pi}_\gamma}{d\theta} = \beta_\gamma'(c - \hat{h}_\gamma),
	\end{align}
	see also \cite[Theorem 2.2]{Eckstein2019}. It further 
		holds
		\begin{align}
		\label{eq:ineqchain}
		\po_{\theta, \gamma}(\mv) \leq \int c
		\,d\hat{\pi}_\gamma - \beta_{\gamma}^\ast(1)\leq 
		\po(\mv) - \beta_{\gamma}^\ast(1)
		\end{align}
		and hence $\int c 
		\,d\hat{\pi}_\gamma$ 
		converges to
		$\po(\mv)$ for $\gamma \rightarrow \infty$ whenever $\lim_{\gamma 
			\rightarrow \infty}\po_{\theta, \gamma}(\mv) = \po(\mv)$ holds. The 
			latter 
		convergence, and particularly the correct conditions on $\theta$, is an 
		open problem even for MOT, see also  \cite[Theorem 5.5]{de2018entropic}. 
		Nevertheless, given that the convergence of values $\lim_{\gamma \rightarrow \infty}\po_{\theta, \gamma}(\mv) = \po(\mv)$ holds, \eqref{eq:ineqchain} above also implies that any limiting point of $(\pi_{\gamma})_{\gamma > 0}$ is an optimizer of $\po(\mv)$. Further, by tightness, one also knows that a convergent subsequence exists. Uniqueness of such a limit is however an open problem, not least since the optimizer of the $\po(\mv)$ does not need to be unique, as seen in Example \ref{Ex2}. 
\end{remark}

\subsection{The case of finitely many quoted call options}

So far we have assumed that market specified the risk-neutral distributions of 
each asset at the given maturities. Equivalently, we assumed that the set of 
traded strikes at these maturities was dense in $\R$. 
This allows us to use the language of measures and of optimal transportation 
but is a simplifying assumption: in practice only finitely many call options 
are liquidly traded. Observe that our numerical methods can easily address this 
point: in the NN method we simply restrict $\varphi_{t,i}$ in $\Do^m$ to linear 
combinations of the traded call options, see Section \ref{sec:FX} below. Likewise, in the LP implementation, we 
consider discrete measures supported on the traded strikes, in analogy to 
\eqref{eq:DSdiscretisation}. Moreover, we can establish convergence of the 
problems with finitely many constraints to the MMOT problem as the number of 
strikes increases. 

To this end fix $\mu\in \Pc(\R)$ with support bounds $-\infty\leq 
a_\mu<b_\mu\leq \infty$. 
Let $\bk^n:=\big\{a_\mu<K^n_1<\ldots<K^n_{m_n}<b_\mu\big\}$ be the set of strikes 
and $\bc^n:=\big\{C^n_i:=C_{\mu}(K^n_i): 1\le i\le m_n\big\}$ be the collection 
of the corresponding prices of call options. Naturally, we assume that this discrete set of strikes gets asymptotically dense in the following sense:
\begin{ass} \label{hyp:stab}
	As $n\to\infty$, one has
	\b*\label{eq:calls_agree}
	\Delta\bk^n:=\max_{2\le i\le 
	m_n}\left(K^n_i-K^n_{i-1}\right)\longrightarrow 0,\quad K_1^n\to a_\mu\quad 
	\mbox{and} \quad K^n_{m_n}\to b_\mu.
	\e*
\end{ass}  
The following result, together with Proposition \ref{prop:approx}, establishes 
sufficient conditions for the MMOT problems for measures $\mv^n$ matching only 
finitely many call prices from $\mv$ to converge to the MMOT problem for $\mv$. 
\begin{proposition}\label{prop:stab}
Let Assumption \ref{hyp:stab} hold. Then, for any sequence  $(\mu^n)_{n\ge 1}$ satisfying 
	\be\label{eq:finiteKagree}
	\int x 
	\mu^n(dx)=\int x \mu(dx)\equiv \lambda \mbox{ and }	C_{\mu^n}(K^n_i)~=~C_{\m}(K_i^n), \mbox{ for } i=1,\ldots, m_n,
	\ee
	we have 
	\b*
	\frac{1}{2}\Wc_1(\mu^n,\m) &\le& \Delta \bk^n+ K^n_1-\lambda+C_\m(K^n_1)+C_\m(K^n_{m_n})
	\e*
	and, in particular, $\Wc_1(\mu,\mu^n)\to 0$ as $n\to \infty$.
\end{proposition}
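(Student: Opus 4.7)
My plan is to invoke the Kantorovich--Rubinstein duality
\[
\Wc_1(\mu^n,\mu) \;=\; \sup_{f : \mathrm{Lip}(f) \le 1} \int f\, d(\mu^n-\mu),
\]
which is valid on $\Pc(\R)$, and to approximate any $1$-Lipschitz test function $f$ by a piecewise linear interpolant $\tilde f$ built from the grid of strikes. The idea is that $\tilde f$ can be expressed as a linear combination of the call payoffs $(x-K_i^n)^+$ and a constant, so by \eqref{eq:finiteKagree} it is priced identically by $\mu$ and $\mu^n$, and only the uniform distance between $f$ and $\tilde f$ contributes to the bound.

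More precisely, I define $\tilde f$ to coincide with $f$ at every strike $K_i^n$, to interpolate affinely on each interval $[K_i^n, K_{i+1}^n]$, and to take the constant values $f(K_1^n)$ and $f(K_{m_n}^n)$ on the two outer tails. Because $f$ is $1$-Lipschitz, every slope of $\tilde f$ lies in $[-1,1]$, so $\tilde f$ is itself $1$-Lipschitz, and its derivative is piecewise constant with jumps only at the $K_i^n$. This is precisely the structure generated by a constant plus finitely many capped call spreads $(x-K_i^n)^+ - (x-K_{i+1}^n)^+$, whence $\int \tilde f\, d\mu = \int \tilde f\, d\mu^n$ by \eqref{eq:finiteKagree}.

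The crucial estimate is the uniform error bound. On each $[K_i^n, K_{i+1}^n]$, both $f$ and $\tilde f$ are $1$-Lipschitz and agree at the endpoints, whence
\[
|f(x)-\tilde f(x)| \;\le\; \min\bigl(2(x-K_i^n),\, 2(K_{i+1}^n-x)\bigr) \;\le\; K_{i+1}^n - K_i^n \;\le\; \Delta\bk^n,
\]
while in the tails $|f(x)-\tilde f(x)|\le (K_1^n - x)^+$ for $x<K_1^n$ and $\le (x - K_{m_n}^n)^+$ for $x>K_{m_n}^n$. Integrating against $\mu$ and $\mu^n$ and using put--call parity $\int (K-x)^+\, d\nu = C_\nu(K) + K - \int y\, d\nu$ together with \eqref{eq:finiteKagree} gives, for $\nu \in \{\mu,\mu^n\}$,
\[
\int |f-\tilde f|\, d\nu \;\le\; \Delta\bk^n + \bigl(K_1^n - \lambda + C_\mu(K_1^n)\bigr) + C_\mu(K_{m_n}^n).
\]
Summing the two bounds for $\nu=\mu$ and $\nu=\mu^n$ and taking the supremum over $f$ with $\mathrm{Lip}(f)\le 1$ yields the advertised inequality with the factor $\tfrac12$.

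The convergence $\Wc_1(\mu^n,\mu)\to 0$ then follows from Assumption \ref{hyp:stab}: $\Delta\bk^n \to 0$ by hypothesis, while $K_1^n \to a_\mu$ and $K_{m_n}^n \to b_\mu$ together with dominated convergence (using that $\mu\in \Pc(\R)$ has finite first moment) force $K_1^n - \lambda + C_\mu(K_1^n) = \int (K_1^n - x)^+ \,d\mu \to 0$ and $C_\mu(K_{m_n}^n) \to 0$. The main subtlety of the proof is producing the approximant $\tilde f$ with the right two properties simultaneously --- being a legitimate combination of the available hedging instruments so that the matching assumption \eqref{eq:finiteKagree} forces its integrals under $\mu$ and $\mu^n$ to agree, and being close enough to $f$ to produce the sharp factor $\Delta\bk^n$ rather than something like $2\Delta\bk^n$; the piecewise linear interpolation on the grid, combined with the ``min of two $1$-Lipschitz bounds'' estimate above, is what delivers both.
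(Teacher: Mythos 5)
Your proof is correct, and it takes a genuinely different route from the paper. The paper bounds $\Wc_1(\mu,\mu^n)$ by a primal/coupling argument: it first truncates both measures to $[K_1^n,K_{m_n}^n]$, then introduces an auxiliary measure $\nu^n$ whose call price function is the pointwise maximum $C_\mu\vee C_{\mu^n}$ (so $\nu^n$ dominates both in convex order), and controls $\Wc_1(\mu,\nu^n)$ and $\Wc_1(\mu^n,\nu^n)$ via a uniformly integrable Skorokhod embedding in Brownian motion and the observation that, conditionally on starting at a point of $[K_i^n,K_{i+1}^n]$, the Brownian particle is confined there until the second stopping time. Your argument works entirely on the dual (Kantorovich--Rubinstein) side instead: you replace an arbitrary $1$-Lipschitz test function $f$ by its piecewise-linear interpolant $\tilde f$ on the strike grid (flat in the tails), observe that $\tilde f$ lies in the span of the constant and the traded call payoffs $(x-K_i^n)^+$ with weights summing to zero, so that the matching condition \eqref{eq:finiteKagree} forces $\int\tilde f\,d\mu=\int\tilde f\,d\mu^n$, and then control $\int|f-\tilde f|\,d\nu$ by the mesh size plus the two tail terms (the latter identified via put--call parity, which is where the common mean $\lambda$ is used). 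Both arguments produce exactly the same right-hand side, but yours is more elementary (no auxiliary convex-order-dominating measure, no Skorokhod embedding) and carries the pleasant financial reading that the gap $\tfrac12\Wc_1(\mu^n,\mu)$ is exactly the static superhedging error one incurs when restricted to the finitely many quoted calls.
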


\begin{proof}
	Note that $K\to C_\mu(K)$ is $1$-Lipschitz continuous and decreasing with 
	$C_\mu(K)\to 0$ as $K\to b_\mu$ and $C_\mu(K)\ge \lambda-K$ with $C_\mu(K)+K\to \lambda$ as 
	$K\to a_\mu$. Fix $n\geq 1$. We claim that
	\be \label{eq:mineq}
	\left|C_{\m^n}(K)-C_{\mu}(K)\right|&\le& \Delta \bk^n,\quad \mbox{for all } K\in [K^n_1,K^n_{m_n}].
	\ee 
	Indeed, for each $K\in [K^n_{i},K^n_{i+1}]$ with some $i$, one has by definition 
	\b* 
    C_{\mu^n}(K) - C_{\mu}(K) \le C_{\mu^n}(K^n_i) - C_{\mu}(K^n_{i+1}) =  C_{\mu}(K^n_i) - C_{\mu}(K^n_{i+1}) \le \Delta \bk^n.
	\e*
	Similarly one has $C_{\mu}(K) - C_{\mu^n}(K)\le \Delta \bk^n$ and thus \eqref{eq:mineq} holds. 
	
	Consider first the particular case when 
	$\supp(\mu)\cup\supp(\mu^n)\subset [K^n_1,K^n_{m_n}]$. 
	Let $\nu^n$ be the measure supported inside $[K^n_1,K^n_{m_n}]$  with call prices defined via
	$$ C_{\nu^n}(K)=C_\mu(K)\lor C_{\mu^n}(K),\quad K\in \R.$$
	Note that $\mu \leqcvx \nu^n$, $\mu^n\leqcvx\nu^n$ and $C_{\nu_n}(K)=C_\mu(K)=C_{\mu^n}(K)$ for $K\in \bk^n$. 
	Consider a probability space $(\Omega, \mathbb{F}, \mathbb{P})$ supporting 
	a standard Brownian motion $(B_t)$. We can use any standard Skorokhod 
	embedding, e.g., the Chacon-Walsh embedding, see \cite{obloj2004skorokhod}, 
	to find stopping times $\tau\leq \rho$ such that $B_\tau \sim \mu$, 
	$B_\rho\sim \nu^n$ and $(B_{t\land \rho}:t\geq 0)$ is uniformly integrable. 
	The latter property implies in particular that if $K\in [K^n_i,K^n_{i+1}]$ 
	then, conditionally on $\{B_\tau =K\}$, we have $B_\rho \in 
	[K^n_{i},K^n_{i+1}]$. Put differently, we have $|B_\tau-B_\rho|\leq \Delta 
	\bk^n$ and, in particular, $\Wc_1(\mu,\nu^n)\leq \Delta\bk^n$. Likewise, we 
	obtain $\Wc_1(\mu^n,\nu^n)\leq \Delta\bk^n$ and, in conclusion, 
	$\Wc_1(\mu,\mu^n)\leq 2 \Delta \bk^n$.

For the case of general supports we introduce auxiliary measures. Let $Z$ and 
$Z^n$ be random variables distributed according to $\mu$ and $\m^n$ 
respectively. 
	Denote by $\tilde \mu$ and $\tilde \m^n$  the laws of $\tilde Z:=K^n_{m_n}\wedge (K^n_1 \vee Z)$ and 
	$\tilde Z^n:=K^n_{m_n}\wedge (K^n_1 \vee Z^n)$. Note that, for $K\in [K^n_1,K^n_{m_n}]$, 
	$$ C_{\tilde \m}(K)=\int_{K}^\infty (x\wedge 
	K^n_{m_n}-K)\mu(dx)=C_\mu(K)-C_\mu(K^n_{m_n}),$$
	with an analogue expression for $C_{\tilde \m^n}$. Further, $C_{\tilde \mu}(K)=C_{\tilde \mu_n}(K)$ for all $K\notin [K_1^n,K^n_{m_1}]$. In particular,
	$$C_{\tilde \mu}(K_1^n)=\E[\tilde Z]=C_{\mu}(K^n_1)-C_{\mu}(K^n_{m_n})=C_{\mu^n}(K^n_1)-C_{\mu^n}(K^n_{m_n})=\E[\tilde Z^n].$$
	It follows that \eqref{eq:finiteKagree} hold for $\tilde \mu$ and $\tilde \mu^n$ and, by the above, $\Wc_1(\tilde \mu, \tilde \mu^n)\leq 2\Delta \bk^n$. 
	Finally,
	$$\Wc_1(\tilde \mu,\mu) \leq \E[(K_1^n-Z)^+]+\E[(Z-K^n_{m_n})^+] = 
	K_1^n-\E[Z]+C_\m(K_1^n) + C_\m(K^n_{m_n}),$$
	with the same bound valid for $\Wc_1(\tilde \mu^n,\mu^n)$ by 
	\eqref{eq:finiteKagree}. The result follows by the triangular inequality. 
\end{proof}

\section{Numerical Examples}\label{sec:num_ex}
We turn now to numerical results. We implement both methodologies presented 
above: the LP approach of Section \ref{sec:LPtheory} and the NN approach of 
Section \ref{sec:NNtheory}. Our first aim is to showcase that 
both methods are reliable. This is achieved via a comprehensive testing of their performance on a range of examples. In the process, we also discuss the  respective advantages and drawbacks of the two methods. Our second aim is to illustrate the capacity of the MMOT approach to capture and quantify, in a fully non-parametric way, the influence of market inputs on a given pricing and hedging problem. This is achieved by showing how adding additional information sharpens the bounds by reducing $\po-\pu$, the relative range of no arbitrage prices.\footnote{Python code to reproduce the examples, based on TensorFlow for the neural network implementation and Gurobi for the linear programs, can be found at \url{https://github.com/stephaneckstein/superhedging/tree/master/Examples/MMOT} .}

Throughout the examples we mostly work with $d=2$ but also consider $d=3$. We are interested in comparing 
results when we vary the number of maturities, or time points, $T$. To enable such a 
comparison, we mostly consider cost functions that only depend on the final 
time point. More precisely, we focus mostly on:
\begin{align*}
c(x) &:= |x_{T, 1} - x_{T, 2}|^p ~ &\text{(spread option)} \\
c(x) &:= (x_{T, 1} + x_{T, 2} - K)^+. ~ &\text{(basket option)}
\end{align*}
We first assume knowledge of only the marginal distributions at the final time 
point and compute the highest and lowest possible prices for a cost function 
$c$ under these marginal constraints. These correspond to the optimal transport 
bounds $\ou,\oo$ in \eqref{def:ot}. Then we additionally assume that marginals 
at earlier time steps are known. The knowledge of marginal distributions at 
earlier time steps, combined with the martingale condition, further constrains   
the possible joint distributions at the final time point. We can then study the 
degree to which this narrows the price bounds.

\subsection{Uniform marginals}
\label{subsec:Num1}
\def\arraystretch{1.25}

\begin{table}
	\begin{minipage}{0.5\textwidth}
		\begin{tabular}{l P{0.8cm} P{0.8cm} P{0.8cm} P{0.8cm}}
			\noalign{\global\arrayrulewidth=0.1mm}\hline\noalign{\global\arrayrulewidth=0.1mm}
			& \multicolumn{4}{p{3.2cm}}{\centering Spread Option} \\
			$t$ & 1 & 2 & 3 & 4 \\ 
			\noalign{\global\arrayrulewidth=0.4mm}\hline\noalign{\global\arrayrulewidth=0.1mm}
			 \vspace{1mm}
			$x_{t, 1}$ & 1 & 1.6 & 2.5 & 3  \\
			$x_{t, 2}$ & 1 & 1.5 & 1.6 & 2  \\ \hline
		\end{tabular}
	\end{minipage}\hfill
	\begin{minipage}{0.5\textwidth}
		\begin{tabular}{l P{0.8cm} P{0.8cm} P{0.8cm} P{0.8cm}}
			\noalign{\global\arrayrulewidth=0.1mm}\hline\noalign{\global\arrayrulewidth=0.1mm}
			& \multicolumn{4}{p{3.2cm}}{\centering Basket Option} \\
			$t$ & 1 & 2 & 3 & 4 \\ 
			\noalign{\global\arrayrulewidth=0.4mm}\hline\noalign{\global\arrayrulewidth=0.1mm}
			 \vspace{1mm}
			$x_{t, 1}$ & 1 & 1.75 & 2 & 3  \\
			$x_{t, 2}$ & 2 & 2.1 & 2.3 & 3  \\ \hline 
		\end{tabular}
	\end{minipage}\vspace{1mm}
	\caption{Details for the marginal distributions in Section 
	\ref{subsec:Num1}. 
		Each marginal distribution $\mu_{t, i}$ is uniform on 
		the interval $[-x_{t, i}, x_{t, i}]$. In the examples, in case $T=1$, 
		only the information at $t=4$ is used. If $T=2$, the time steps $t = 1, 
		4$ are used. And for $T=4$, all time steps are included.}
	\label{table:marginals}
\end{table}
\def\arraystretch{1}
We first consider a simple example where all occurring marginal distributions 
are uniform, see Table \ref{table:marginals}. For both spread and basket 
option, Table \ref{table:comparisonLPNN} compares the two numerical approaches 
introduced in Sections \ref{sec:LPtheory} and \ref{sec:NNtheory}. For the 
linear programming method, we discretize as shown in Appendix 
\ref{app:discrete}. For the neural network implementation, we use the 
network 
architecture described in \cite[Section 4]{Eckstein2019}.

First, in Table \ref{table:comparisonLPNN}, we consider marginal distribution 
constraints at two maturities and then, in Table \ref{table:improvedbounds}, 
extend it to four maturities. For the latter, only the numerical values obtained by the neural 
network implementation are reported, as the discretized LP problem is too large 
to solve in the case of four time steps\footnote{We note that there are heuristics which allow LP methods to tackle such problems, a prime example being the cutting plane method used by \cite{phlautomated}. However, these correspond to a significantly different approach than pursued in Section \ref{sec:LPtheory} and introduce qualitatively new types of errors. We do not employ these methods as it would make a comprehensive study of numerics even harder.}.
Finally, Figures \ref{fg:OptimizerSpread} and \ref{fg:OptimizerBasket} show how 
the numerically optimal couplings between the two assets at the final time 
point change with the inclusion of more information from previous time steps.

\def\arraystretch{1.25}
\begin{table}
	\caption{Comparison of the optimal values obtained using different numerical 
	approaches}
	\begin{tabular}{ p{0.55cm} P{1.025cm} P{1.025cm}  P{1.025cm} P{1.025cm}  
	P{1.025cm} 
	P{1.025cm}  P{1.025cm} P{1.025cm}  }
		\noalign{\global\arrayrulewidth=0.1mm}\hline\noalign{\global\arrayrulewidth=0.1mm}
		& \multicolumn{2}{p{2.05cm}}{~~\centering$\mmoto$} & 
		\multicolumn{2}{p{2.05cm}}{~\centering$\oo$} & 
		\multicolumn{2}{p{2.05cm}}{~~\centering$\mmotu$} & 
		\multicolumn{2}{p{2.05cm}}{~\centering$\ou$} \\
		& {\centering LP} & {\centering NN} & {\centering LP} & {\centering NN} 
		& {\centering LP} & {\centering NN} & {\centering LP} & {\centering 
		NN}  \\ 			
		\noalign{\global\arrayrulewidth=0.4mm}\hline\noalign{\global\arrayrulewidth=0.1mm}
		 \vspace{1mm}
		$p$ & \multicolumn{8}{c}{Spread Option}\\
		$1/2$ & $1.578$ & 1.577 & $1.578$ & 1.577 &  $0.383$ & 0.396 & $0.383$ 
		& 0.391  \\
		$1$  & $2.500$ & 2.500 & $2.500$ & 2.500 & $0.500$ & 0.501 & $0.500$ & 
		0.500 \\
		$2$  & $8.273$ & 8.254 & $8.338$ & 8.337 & $0.401$ & 0.416 & $0.335$ & 
		0.335  \\
		$3$ & $31.16$ & 31.24 & $31.29$ & 31.25 & $0.301$ & 0.321 & $0.253$ & 
		0.253  \\\vspace{1mm}
		$K$ & \multicolumn{8}{c}{Basket Option}\\
		$-1$ & $2.042$ & 2.041 & $2.042$ & 2.041  &  $1.000$& 1.000 & $1.000$ & 
		1.000  \\
		$0$ & $1.500$ & 1.500 & $1.500$ & 1.500 & $0.250$ & 0.260 & $0.000$ & 
		0.025 \\
		$1$ & $1.042$ & 1.041 & $1.042$ & 1.041 & $0.000$ & 0.006 & $0.000$ & 
		0.000   \\
		$2$ & $0.667$ & 0.667 & $0.667$ & 0.667 & $0.000$ & 0.000 & $0.000$ & 
		0.000  \\
		\hline
	\end{tabular}\vspace{1mm}
	\caption*{Optimal values for the example in Section \ref{subsec:Num1} example and the case $T=2$. 
	For the linear programming (LP) method, marginals are discretized in convex 
	order using the method in Appendix \ref{app:discrete}. The penalty function 
	for the neural network implementation is $\beta_\gamma(x) = \gamma \cdot 
	x_+^2$ where $\gamma$ is set to $2500$ times the number of time steps in 
	the optimization problem.}\label{table:comparisonLPNN}
\end{table}
\def\arraystretch{1}

In Table \ref{table:comparisonLPNN} we observe that in the simple examples 
considered, the two numerical approaches agree in most of the cases. In some 
cases, like for the spread option $(p=2)$ and the problem $\po$, there are 
slight differences between the optimal value obtained by the neural network 
implementation (8.254) and the linear programming approach (8.273). For the 
neural network implementation, we believe the biggest source of numerical error 
arises from the penalization of the inequality constraint in the dual 
formulation. Since the penalization decreases the upper bound (i.e., 
$\Do^m_{\theta, \gamma} \leq \Do^m$, see \cite[Theorem 2.2]{Eckstein2019} and 
note that for the quadratic penalization used here, it holds $\beta(0) = 0$) and increases the lower bound, the reported bounds by the neural network method 
are likely slightly more narrow than the true analytical bounds. By choosing 
$\gamma$ large enough this effect can be minimized.\footnote{By doing so, one 
must consider the numerical stability of the resulting problem. If $\gamma$ is 
too large, gradients explode and the numerical optimization procedure will not 
find the true optimizer, which leads to a different kind of numerical error. 
For the problems considered, $\gamma$ was gradually increased (while 
simultaneously increasing the batch size in the numerical implementation for 
stability) so that no further change in optimal values could be observed.} For 
the linear programming method, one cannot make a similar estimation for whether 
the obtained numerical bounds are narrower or wider than the true bounds. The 
main (and in this example only) approximation error for the linear programming 
implementation arises from discretization, which in priniciple can both increase or decrease 
optimal values. We comment further on the monotonicity of the approximations below. 

By observing values by both the LP and NN method, one can obtain greater trust 
in 
the obtained values, whenever they coincide. The reason is that both 
computational methods have entirely different sources of error, and hence 
whenever the obtained numerical values (almost) agree, it suggests that both 
errors are in fact small, since it is unlikely that the different error sources 
should produce the same incorrect value instead. Nevertheless, in cases where 
the two methods do not agree, the question arises how to obtain more certainty 
about the true value. For the LP method, simply observing the evolution of 
values in the parameter $n$ can give a clearer picture. The same is true for 
the NN 
method with the parameter $\gamma$. Further, since the NN method is based on a 
stochastic algorithm, running the optimization several times can give better 
indicators of the true value. We performed such an analysis in Figure 
\ref{fg:convergence_analysis} for the aforementioned case of the spread option 
($p = 2$). We observe two patterns in Figure \ref{fg:convergence_analysis}. First, $n \mapsto \mmoto(\mv^n)$ is decreasing, and $\gamma \mapsto \mmoto_{\theta, \gamma}(\mv)$ is increasing. The latter is easily derived from the definition, as mentioned above. For the mapping $n \mapsto \mmoto(\mv^n)$, two effects are at work. First, the marginal distributions simply change and so the optimal value also changes. Intuitively, this first effect can be seen as the one which determines the nature of the mapping $n \mapsto \oo(\mv^n)$, and it has no monotonicity. The second effect is the effect of the martingale constraint. The (relaxed) martingale constraint is a constraint involving each element of the support and it becomes more restrictive as more support points are added. Further, the more points we add the closer we approximate the target marginals and hence the less slack we allow from the martingale property. Together, these effects, in our experience, dominate and explain the decreasing nature of the mapping $n\mapsto \mmoto(\mv^n)$.

\begin{figure}
	\caption{Numerical convergence analysis for the case of a spread option and 
	$p=2$ from Table \ref{table:comparisonLPNN}}\vspace{4mm}
	\centering
	\vspace*{-5mm}
	\begin{minipage}[b]{0.75\textwidth}
		\includegraphics[width=\textwidth,height=0.7\textwidth]{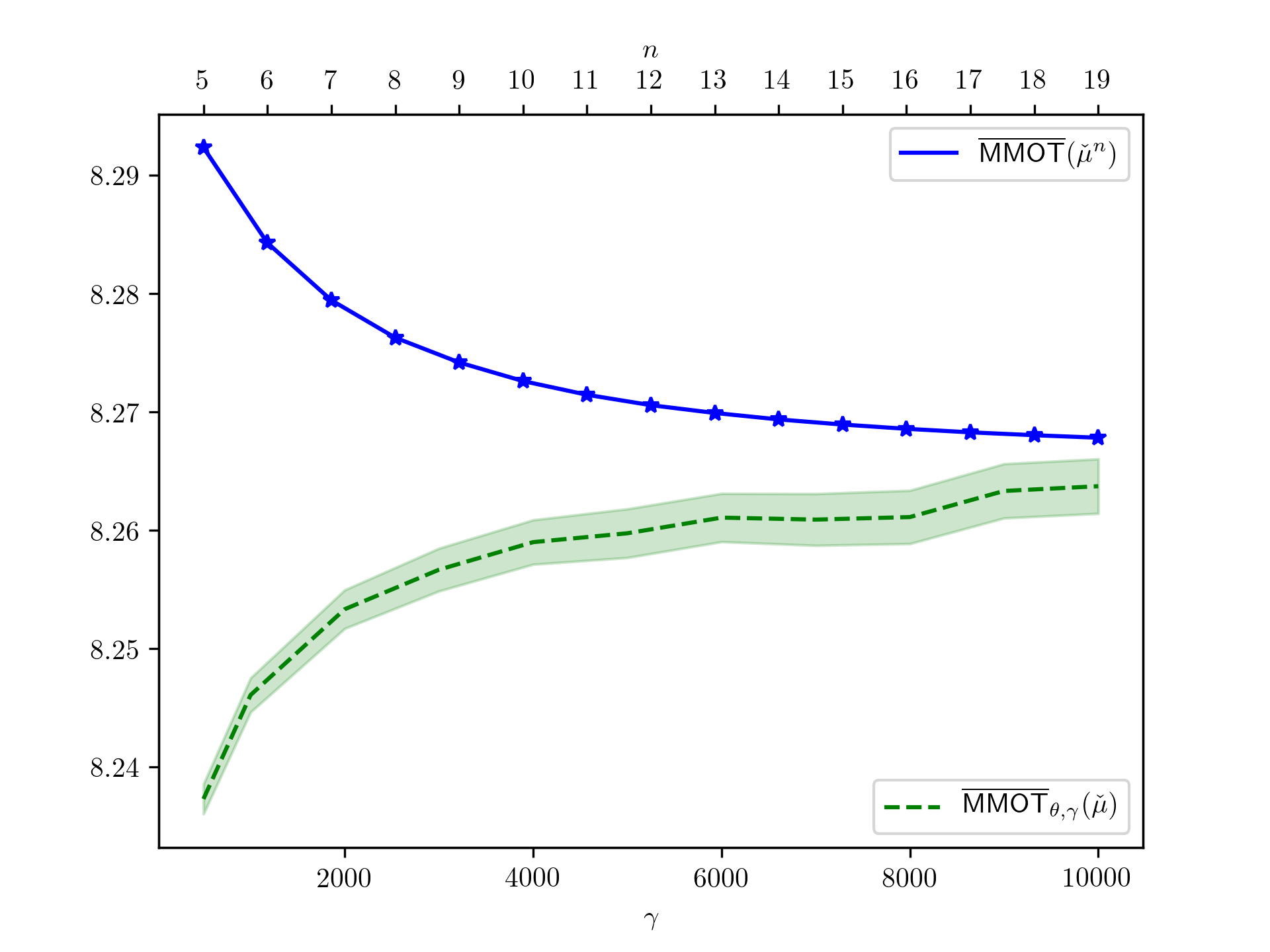}
	\end{minipage}\hspace{-5mm}
	\hfill
	\caption*{The blue line shows how the numerical values obtained by the LP 
	method depend on the discretisation parameter $n$, see Appendix 
	\ref{app:discrete}. The green line shows the analogue for 
	the NN method, and the penalization parameter $\gamma$. Since the numerical 
	method in the NN case is based on stochastic gradient descent, the final 
	values can vary. The green error bands indicate the standard deviation of 
	the obtained numerical values across 10 different sample runs. The higher 
	the penalization parameter $\gamma$ is chosen, the more the final values vary.}
	\label{fg:convergence_analysis}
\end{figure}

\def\arraystretch{1.25}
\begin{table}
	\caption{Improvement of bounds with information from additional maturities}
	\begin{tabular}{ l P{1.15cm} P{1.15cm} P{1.15cm} P{1.15cm} P{1.15cm} 
	P{1.15cm}  }
		\noalign{\global\arrayrulewidth=0.1mm}\hline\noalign{\global\arrayrulewidth=0.1mm}
		& {\centering$\oo$} & \multicolumn{2}{p{2.3cm}}{\centering~~$\mmoto$} & 
		{\centering$\ou$} & \multicolumn{2}{p{2.3cm}}{\centering~~$\mmotu$} \\
		$T$ & {\centering 1} & {\centering 2} & {\centering 4} & {\centering 1} 
		& {\centering 2} & {\centering 4}  \tabularnewline 
		\noalign{\global\arrayrulewidth=0.4mm}\hline\noalign{\global\arrayrulewidth=0.1mm}
		 \vspace{1mm}
		\begin{tabular}{l}Spread Option \\ ($p=2$) \end{tabular}  & 8.337 & 
		8.254  & 7.920 & 0.335 & 0.416 & 0.776 \\
		\begin{tabular}{l}Basket Option \\ ($K=0$) \end{tabular} & $1.500$ & 
		1.500 & 1.501 & 0.025 & 0.260 & 0.345 \\
		\hline
	\end{tabular}\vspace{1mm}\\
	\caption*{Numerically optimal values for the example in Section \ref{subsec:Num1} obtained by the NN implementation. 
	The penalization uses $\beta_\gamma(x) = 
	\gamma \cdot x_+^2$ where $\gamma$ is set to $2500$ times the number of 
	time steps in the optimization problem.}
	\label{table:improvedbounds}
\end{table}
\def\arraystretch{1}
\begin{figure}
	\caption{Spread Option ($p=2$). Numerically optimal couplings at the final 
	time point obtained using the NN approach.}\vspace{4mm}
	\begin{minipage}[b]{0.35\textwidth}
		\centering{Maximizer, $T=1$}
		\includegraphics[width=\textwidth,height=0.7\textwidth]{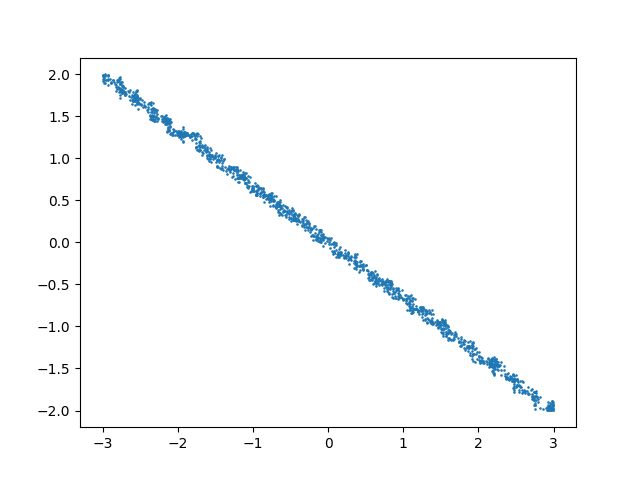}
	\end{minipage}\hspace{-5mm}
	\hfill
	\begin{minipage}[b]{0.35\textwidth} 
		\centering{Maximizer, $T=2$}
		\includegraphics[width=\textwidth,height=0.7\textwidth]{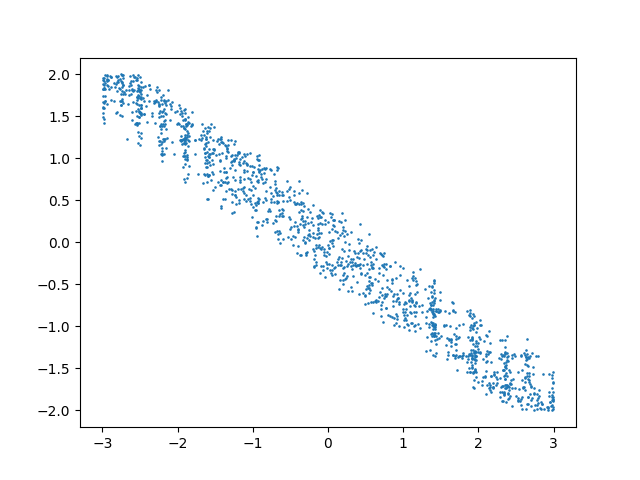}
	\end{minipage}\hspace{-5mm}
	\hfill
	\begin{minipage}[b]{0.35\textwidth}  
		\centering{Maximizer, $T=4$}
		\includegraphics[width=\textwidth,height=0.7\textwidth]{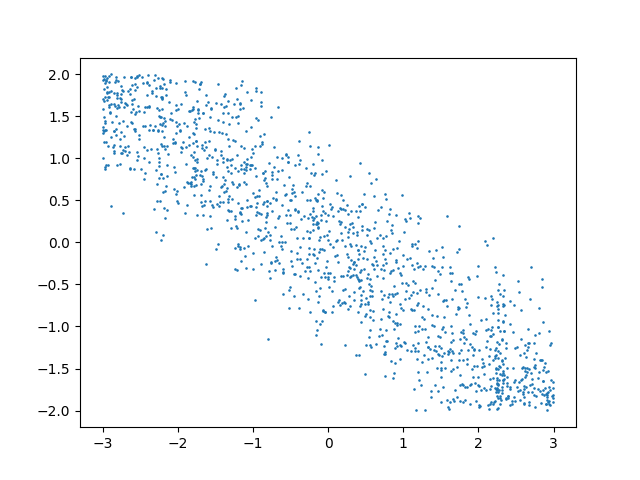}
	\end{minipage}
	
	\vspace{5mm}
	\begin{minipage}[b]{0.35\textwidth}
		\centering{Minimizer, $T=1$}
		\includegraphics[width=\textwidth,height=0.7\textwidth]{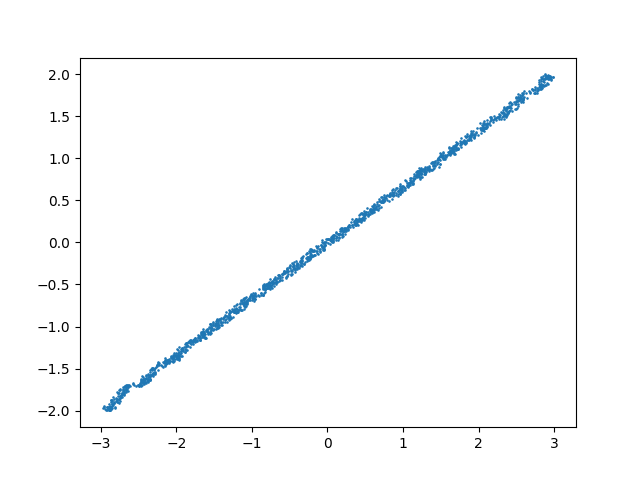}
	\end{minipage}\hspace{-5mm}
	\hfill
	\begin{minipage}[b]{0.35\textwidth} 
		\centering{Minimizer, $T=2$}
		\includegraphics[width=\textwidth,height=0.7\textwidth]{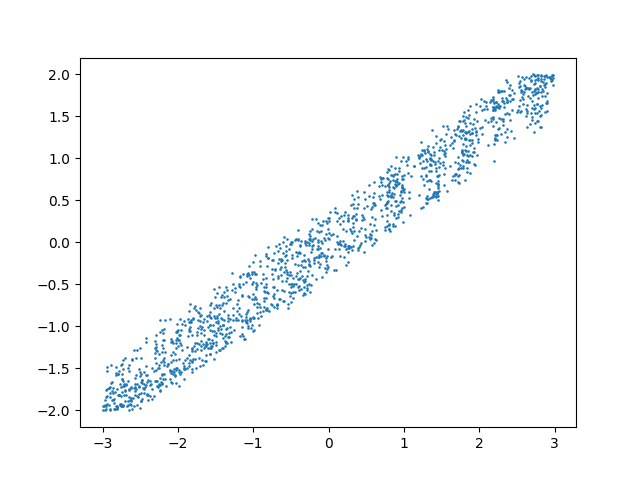}
	\end{minipage}\hspace{-5mm}
	\hfill
	\begin{minipage}[b]{0.35\textwidth}  
		\centering{Minimizer, $T=4$}
		\includegraphics[width=\textwidth,height=0.7\textwidth]{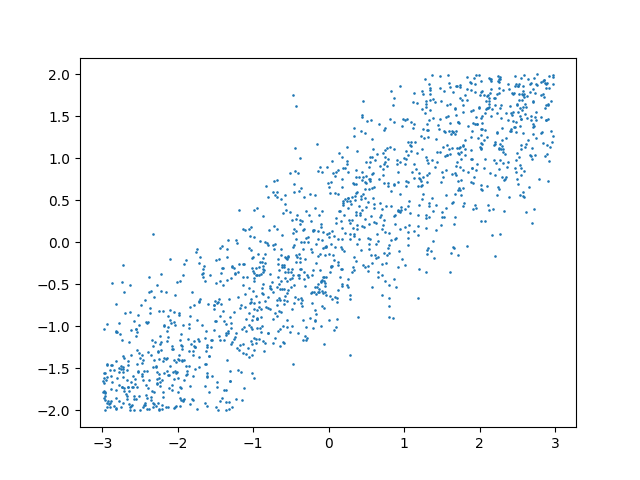}
	\end{minipage}
	\hfill
	\label{fg:OptimizerSpread}
\end{figure}

\begin{figure}
	\caption{Basket Option ($K=0$). Numerically optimal couplings at final time 
	point obtained using the NN approach.}\vspace{4mm}
	\begin{minipage}[b]{0.35\textwidth}
		\centering{Maximizer, $T=1$}
		\includegraphics[width=\textwidth,height=0.7\textwidth]{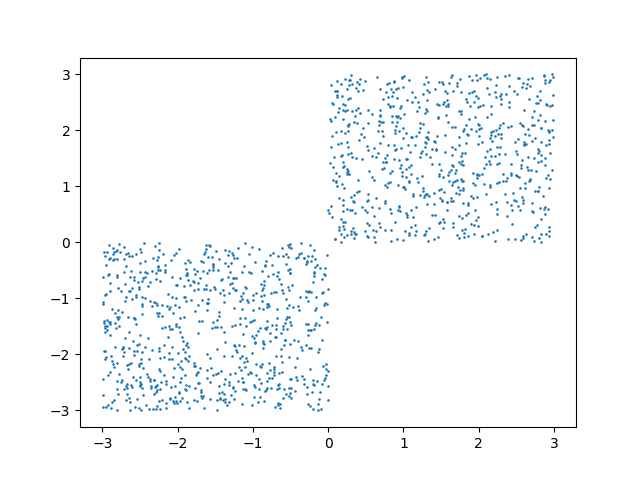}
	\end{minipage}\hspace{-5mm}
	\hfill
	\begin{minipage}[b]{0.35\textwidth} 
		\centering{Maximizer, $T=2$}
		\includegraphics[width=\textwidth,height=0.7\textwidth]{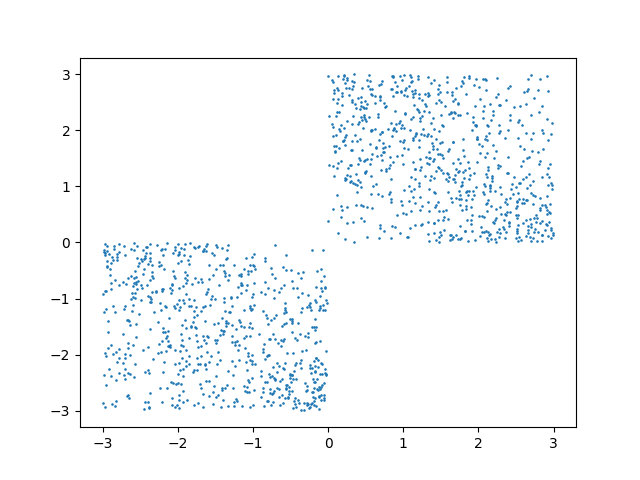}
	\end{minipage}\hspace{-5mm}
	\hfill
	\begin{minipage}[b]{0.35\textwidth}  
		\centering{Maximizer, $T=4$}
		\includegraphics[width=\textwidth,height=0.7\textwidth]{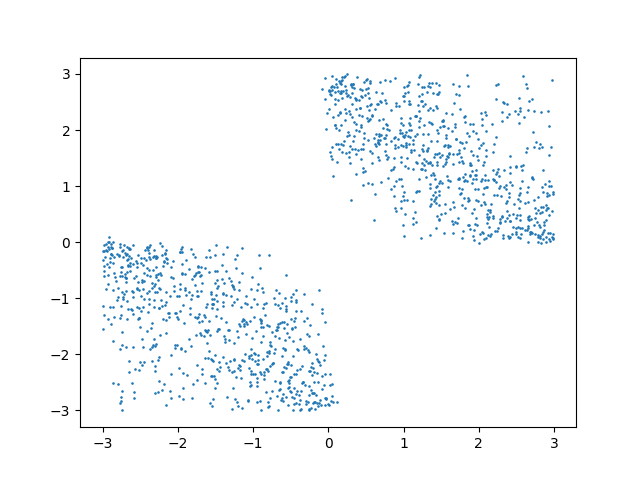}
	\end{minipage}
	
	\vspace{5mm}
	\begin{minipage}[b]{0.35\textwidth}
		\centering{Minimizer, $T=1$}
		\includegraphics[width=\textwidth,height=0.7\textwidth]{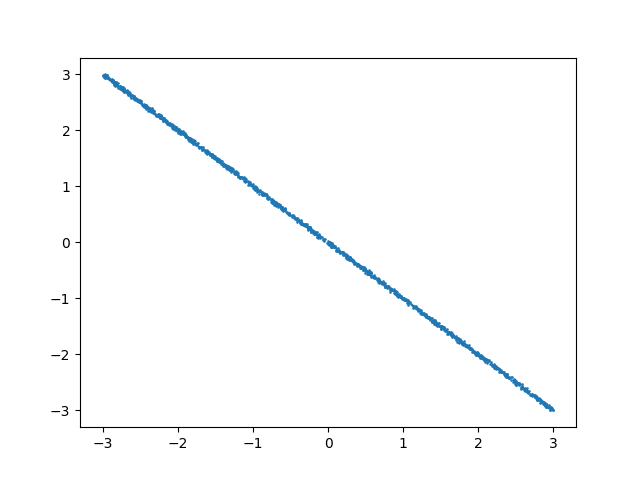}
	\end{minipage}\hspace{-5mm}
	\hfill
	\begin{minipage}[b]{0.35\textwidth} 
		\centering{Minimizer, $T=2$}
		\includegraphics[width=\textwidth,height=0.7\textwidth]{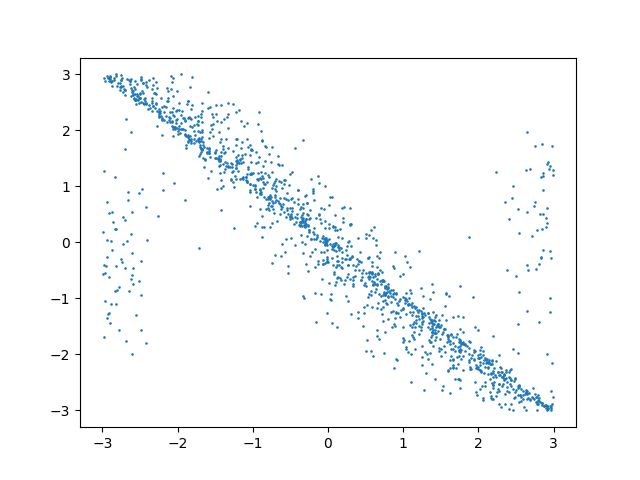}
	\end{minipage}\hspace{-5mm}
	\hfill
	\begin{minipage}[b]{0.35\textwidth}  
		\centering{Minimizer, $T=4$}
		\includegraphics[width=\textwidth,height=0.7\textwidth]{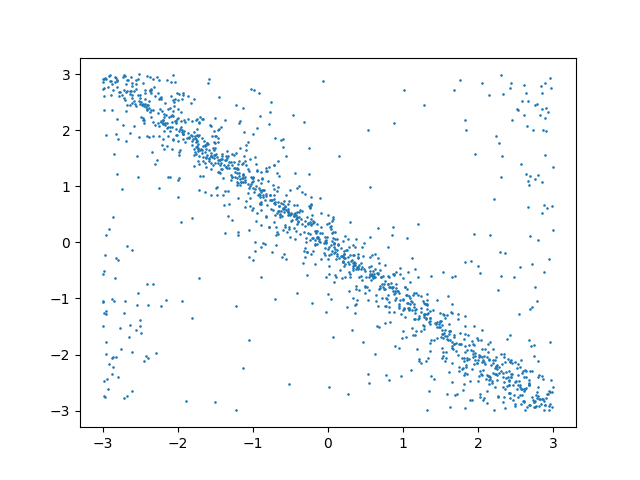}
	\end{minipage}
	\hfill
	\label{fg:OptimizerBasket}
\end{figure}

\begin{figure}
	\caption{Numerically optimal couplings at the final time point using the LP 
	approach for $T=2$.}\vspace{4mm}
	
	\vspace{1mm}
	\begin{minipage}[b]{0.5\textwidth}
		\centering{Maximizer Spread ($p=2$)}
		\includegraphics[width=\textwidth,height=0.7\textwidth]{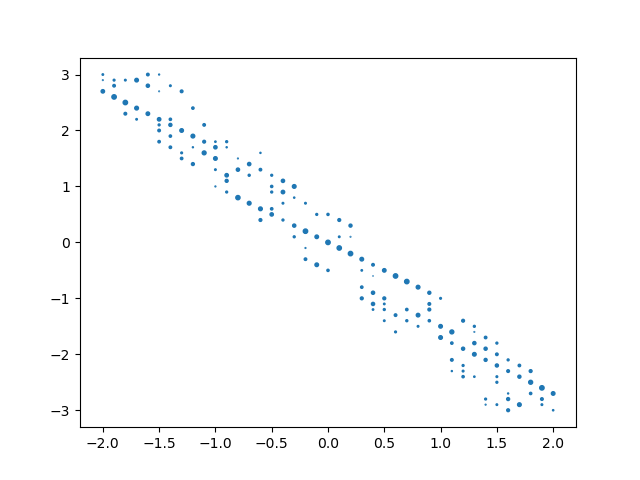}
	\end{minipage}\hspace{-5mm}
	\hfill
	\begin{minipage}[b]{0.5\textwidth} 
		\centering{Minimizer Spread ($p=2$)}
		\includegraphics[width=\textwidth,height=0.7\textwidth]{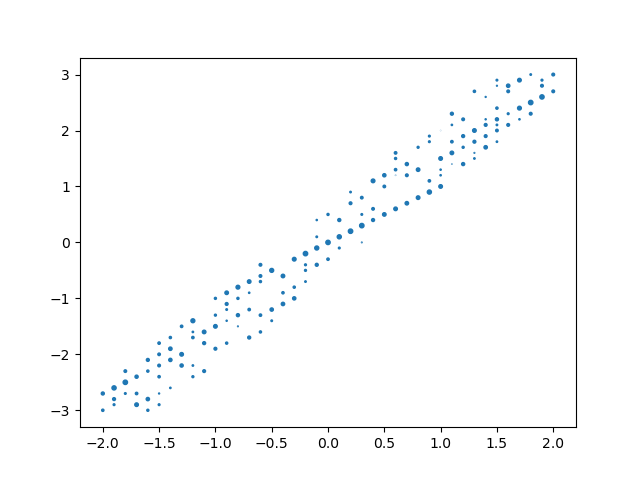}
	\end{minipage}\hspace{-5mm}
	\hfill
	
	\begin{minipage}[b]{0.5\textwidth}
		\centering{Maximizer Basket ($K=0$)}
		\includegraphics[width=\textwidth,height=0.7\textwidth]{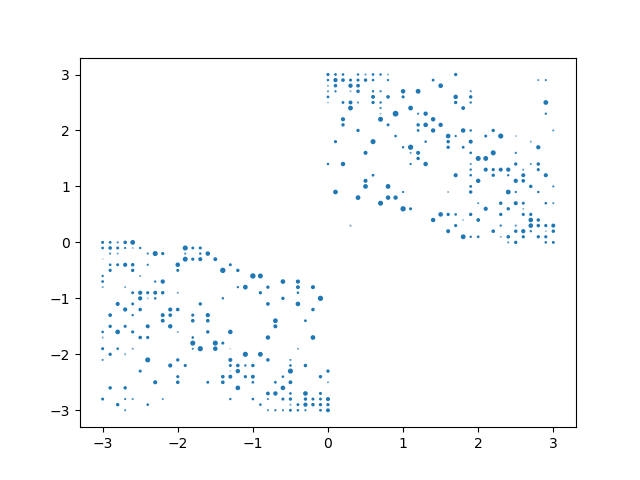}
	\end{minipage}\hspace{-5mm}
	\hfill
	\begin{minipage}[b]{0.5\textwidth} 
		\centering{Minimizer Basket ($K=0$)}
		\includegraphics[width=\textwidth,height=0.7\textwidth]{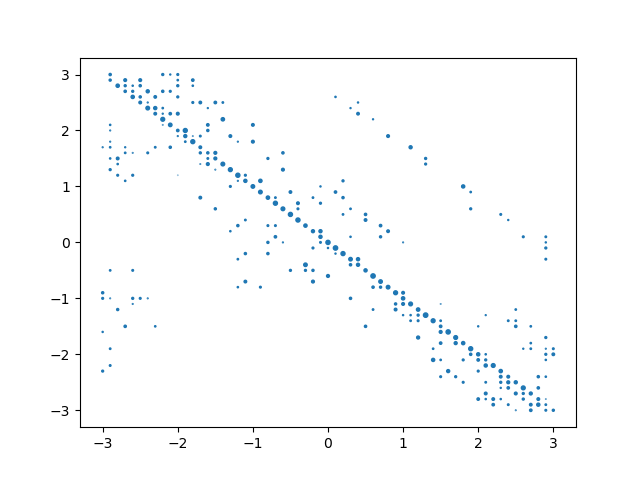}
	\end{minipage}\hspace{-5mm}
	\hfill
	
	\label{fg:LP_Optimizer}
\end{figure}

Having built confidence in the numerical precision of our methods, we now turn to examining how they capture and price the effect of using additional information. Recall that Table \ref{table:marginals} summarises the marginal distribution information available in the context of the simple example studied in this section. Table \ref{table:improvedbounds} shows the difference in numerical bounds from 
working with marginal distributions at 1, 2, or 4 maturities. We see that for both the 
spread and the basket option, significantly narrower bounds are obtained with each 
additional piece of information. The absolute bounds are still 
quite wide even with four time steps of information used: $(0.78, 7.92)$ for 
the spread and $(0.35, 1.50)$ for the basket option. This suggests that 
applicability of the obtained bounds as a pricing tool will be case-dependent. 
However, in all cases, it is the relative comparison of how the bounds behave 
across assets and when additional information is added which is informative. It 
gives quantitative insight into dependence and structural implications of 
pricing information across assets and maturities. To narrow bounds further we 
would need to include modelling assumption or significantly constraining new 
information, cf.~\cite{phlautomated,lutkebohmert2018tightening}.
We note that in the case of the upper bound for the 
basket option the additional information did not change the bound indicating the additional information is not relevant for this upper no-arbitrage price. We believe it is a strength of the methodology we present here to be able to pick up also such cases. In this particular case, the reasons can be understood analytically. Indeed, let us focus on the case $K=0$ in Table \ref{table:improvedbounds} but similar comments apply to other strikes in Table \ref{table:comparisonLPNN}, as well as to results presented in Table \ref{table:lognormal} in the next section. Using $(x+y)^+\leq x^++y^+$ we have
$$ \mmoto(\mv) = \sup_{\pi\in \Mc(\mv)}~ \int  (x_{T, 1} + x_{T, 2})^+ \,d\pi\leq \int x_{T,1} d\mu_{T,1}+\int x_{T,2} d\mu_{T,2}
$$
and this upper bound is independent of $\pi$ and is attained by any $\pi\in \Mc(\mv)$ for which $x_{T,1}$ and $x_{T,2}$ have the same sign $\pi$-a.s. This is a weak requirement and is typically attained by many couplings\footnote{Nevertheless we may come up with marginals for which this is not true. It we consider $T=2$ and marginals as in Table \ref{table:marginals} but we change $\mu_{1,2}$ to be uniform on $[-3,3]$ this forces the second asset to be constant through time and decreases the upper bound from $1.5$ to $1.3799$.}, as seen in Figure \ref{fg:OptimizerBasket} below. 

Figures \ref{fg:OptimizerSpread} and \ref{fg:OptimizerBasket} showcase the 
joint distribution between the first asset ($x$-axis) and the second asset 
($y$-axis) at the final time point. These are obtained using the NN approach via  
\eqref{primal_formula}. 
As expected, for the cases $T=2$ the depicted 
optimizers look very similar to the ones obtained by linear programming displayed in Figure \ref{fg:LP_Optimizer}.  The most notable characteristic of the 
observed optimizers is that in most cases (again, except for the supremum 
problem of the basket option), the optimal couplings become smoother when more 
time steps are involved. This is an interesting feature: where the OT problem 
returns a deterministic (Monge) coupling, when we add the martingale constraint 
the Monge coupling is not feasible but the optimizers are still concentrated on lower 
dimensional sets, see \cite{ghoussoub2019structure}. When we add further 
time points it adds more constraints and the models become less and less 
singular, i.e., having a more diffused support. 

\subsection{Lognormal marginals}
\label{subsec:lognormal}
We turn now to distributions more representative of real market conditions. Specifically, instead of uniform marginals we consider lognormal ones. 
As before, we consider two assets and, in this case, three distinct maturities. The way the 
marginal distributions are set up is that both assets have the same 
distributions at time points $1$ and $3$, but at time $2$, the marginals vary. 
In particular, the marginal distributions imply that the first asset accumulates most of its volatility between time points 
$2$ and $3$, while the second asset accumulates most if its volatility between time 
points $1$ and $2$. More precisely, we set $\mu_{t, i} \sim \exp(\sigma_{t, i} X - 
\sigma_{t, i}^2 
/2)$, where $X$ follows a standard normal distribution and $\sigma_{1, 1} = 
\sigma_{1, 2} = 0.1, \sigma_{3, 1} = \sigma_{3, 2} = 0.2, \sigma_{2, 1} = 0.11, 
\sigma_{2, 2} = 0.19$. 

We first calculate price bounds using only 
marginal information and trading between the first and third time points. Then, we include 
the intermediate maturity (the second time point) as well. This brings the additional marginal information which implies the asymmetry in the way the two assets accumulate their volatility as well as the ability to re-balance the hedging position at the intermediate time point. 
For the implementation, the neural network method remains unchanged compared to 
Section \ref{subsec:Num1}, just larger batch size is used to cope with the 
added difficulty of unbounded support of 
the marginals. For the LP method, we now use discretisation as described in 
\cite[Equation (6.5)]{ACJ:19}.\footnote{To be precise, for the 
	cases $T=2, 3$, the neural network implementation uses batch size $2^{13}, 
	2^{15}$, while the LP method uses $n = 39, 11$ support points for each marginal.}
\def\arraystretch{1.25}
\begin{table}
	\centering
	\caption{Improvement of bounds for the lognormal marginals}
	\begin{tabular}{P{1.025cm} P{1.025cm}  P{1.025cm} P{1.025cm}  
			P{1.025cm} 
			P{1.025cm}  P{1.025cm} P{1.025cm}  }
		\noalign{\global\arrayrulewidth=0.1mm}\hline\noalign{\global\arrayrulewidth=0.1mm}
		\multicolumn{2}{p{2.1cm}}{\centering $\mmoto$ \\ $T=2$} & 
		\multicolumn{2}{p{2.1cm}}{\centering $\mmoto$ \\ $T=3$} & 
		\multicolumn{2}{p{2.1cm}}{\centering $\mmotu$ \\ $T=2$} & 
		\multicolumn{2}{p{2.1cm}}{\centering $\mmotu$ \\ $T = 3$} \\
		{\centering LP} & {\centering NN} & {\centering LP} & {\centering NN} 
		& {\centering LP} & {\centering NN} & {\centering LP} & {\centering 
			NN}  \\ \noalign{\vskip 2mm}
		\multicolumn{8}{c}{Spread Option ($p = 2$)}\\
		0.1587 & 0.1625 & 0.1320 & 0.1359 &  0.0000 & 0.0010 & 0.0244
		& 0.0269 \\ \noalign{\vskip 2mm}
		\multicolumn{8}{c}{Basket Option (at the money, $K=2$)}\\
		0.1593 & 0.1593 & 0.1585 & 0.1593 & 0.0192 & 0.0191 & 0.0397 & 
		0.0423 \\
		\hline
	\end{tabular}\vspace{1mm}
	\caption*{Numerically optimal values for Section \ref{subsec:lognormal} lognormal example obtained using the linear programming (LP) and neural network (NN) implementations. The case $T = 2$ uses only the marginal information at the first and the last time point, while for the case $T=3$ the marginal distributions at an intermediate maturity are also given along with the ability to rebalance the hedging positions.}\label{table:lognormal}
\end{table}
\def\arraystretch{1}

Table \ref{table:lognormal} reports the resulting values. We see that the bounds tighten significantly with the addition of the intermediate maturity information and hedging, highlighting the capacity of our methods to capture and quantify the benefit of such additional information for pricing problems. 
The only example is given by the upper bound for the basket option, which was expected as explained in Section \ref{subsec:Num1}. 
Despite the added difficulty of the non-compactly supported marginals, as compared to Section \ref{subsec:Num1}, the LP and NN methods still produce very similar values in all cases. Most importantly, the effect of the improved bounds by including the additional time point is clearly more significant than the differences between the numerical values, and hence the qualitative message one can derive from this example is robust with respect to the numerical method used.

\subsection{A test of accuracy: comparison to theoretical values}
\label{subsec:accuracy}
In this section we consider a sanity check example where we can compare the numerical values to theoretical ones. 
Since generally, it is very difficult to obtain theoretical values for MMOT problems, we must refer to a structurally 
simple problem. To this end, consider uniform marginals: $\mu_{1, i} = \mathcal{U}([-1, 1])$ and 
$\mu_{2, i} = \mathcal{U}([-2, 2])$ for $i=1, ..., d$, and a cost function 
$c(x) := \sum_{i=1}^d \sum_{j=1}^d c_{i, j} x_{2, i} x_{2, j}$ for some $c_{i, j} \geq 0$ chosen randomly in the interval $[0, 1]$. Such costs are studied further in Section \ref{sec:structure}. An optimizer for this problem is 
characterized by the comonotone coupling among the dimensions at the final time 
step. The associated value can thus be computed to an arbitrary 
precision by sampling. 
Nevertheless, this analytical simplicity does not imply that the example is particularly easy to solve numerically. The 
singular nature of the optimizers is a feature which presents a challenge for numerical convergence.

Table \ref{table:sanity} showcases the 
accuracy of the numerical methods in this example.\footnote{For the LP method, 
the discretisation method from Appendix \ref{app:discrete} is used with $n = 18, 4, 2$ for $d=2, 3, 4$ respectively. For the NN method, the 
penalization $\theta$ is taken as the product measure of the marginals and $\gamma = 5000\cdot d$. 
For the implementation, feed-forward neural networks with 5 
layers and hidden dimension 32 are used, and for training we employed the Adam optimizer 
with $\beta_1 = 
0.99, \beta_2 = 0.995$ and batch size $2^{11},2^{13}, 2^{15}$ for $d=2, 3, 4$.}
Overall, especially for $d=2, 3$, the relative errors are quite small, below 
$1\%$ relative error (in absolute terms, depending on the random cost 
functions, the true optimal values were typically around $3$ to $10$).
For the LP method, higher error values are expected for increasing 
$d$, since fewer support points for the discretisation in each dimension 
can be used. The reason is that the total number of variables in the LP is 
limited due to working memory, and given by $\prod_{t=1}^2 \prod_{i=1}^d n_{t, 
i}$ where $n_{t, i}$ is the number of support points of the discretised 
approximation of $\mu_{t, i}$. A priori, the NN method does not have this 
drawback. The error for the NN method is governed by the term 
$\int \beta_\gamma^*(\frac{d\pi}{d\theta})\,d\theta$, where $\pi$ is an optimal 
coupling and $\theta$ the measure chosen for penalization. In particular when 
any optimizer $\pi$ is highly singular with respect to $\theta$, as in this 
example, this error term can increase sharply with increasing dimension as 
well. While theoretically, this can be overcome by increasing $\gamma$, the 
error of the numerical method has to be taken into account as well, see also 
Figure 
\ref{fg:convergence_analysis}.
\def\arraystretch{1.5}
\begin{table}
	\centering
	\caption{Relative error of the numerical methods}
	\begin{tabular}{ l | c | c | c  }
		Dimension $d$  & 2  & 3 & 4 \\\hline 
		LP & 0.04\% & 0.78\% & 3.12\%  \\
		NN & 0.08\% & 0.61\% & 2.05\% \\
	\end{tabular}\vspace{1mm}\\
	\caption*{Average relative error over 100 sample runs with varying cost 
		functions 
		is showcased for Example \ref{subsec:accuracy}.}
	\label{table:sanity}
\end{table}
\def\arraystretch{1}

\subsection{Real-world application: foreign exchange data} 
\label{sec:FX}
We close the examples section with an example using FX data. We 
work with option data on three currency pairs, $X_1 = \textrm{GBPUSD}$, $X_2 = 
\textrm{EURUSD}$, $X_3 = \textrm{EURGBP}$. The data was collected from a 
Bloomberg terminal on the 28 January 2019 for three tenors: 1y, 1.5y and 2 
years out. We converted\footnote{We are grateful to the Oxford-Man Institute 
for access to Bloomberg terminals and to Shen Wang for his help with the data 
conversion.} prices from FX specific convention to the strike convention used 
here and also converted put prices into call prices using the put-call parity, 
see Figure \ref{fg:Data}. Given the prevailing low interest rates and the illustrative nature of the example, we assumed all domestic and foreign interest rates are equal so no discounting was needed. 
We denote the prices of the three assets by $X_{t, i}$, where $t=0$ is the current exchange rate on 28/01/19 and $t=1,2,3$ corresponds to the three tenors above. To test our methodology, we study a synthetic spread 
process between two USD denominated exchange rates: $(X_{t, 1} - \frac{X_{0, 1}}{X_{0, 2}}X_{t, 2})_{t=1, 2, 3}$. In particular we 
calculate the range of arbitrage free prices for an Asian call option on this spread process over the time points $t=1, 2, 
3$, i.e., we consider the payoff 
\begin{equation}\label{eq:FXpayoff}
c(X) = \left(\frac{1}{3} \sum_{t = 1}^3 X_{t, 1} - \frac{X_{0, 1}}{X_{0, 2}}X_{t, 2}\right)^+. 
\end{equation}

\begin{figure}
	\caption{Foreign exchange option data on three different currency pairs. 
		Date of retrieval for the prices is January 28, 2019.}\vspace{4mm}
	
	\vspace{1mm}
	\begin{minipage}[b]{0.5\textwidth}
		\includegraphics[width=\textwidth,height=0.7\textwidth]{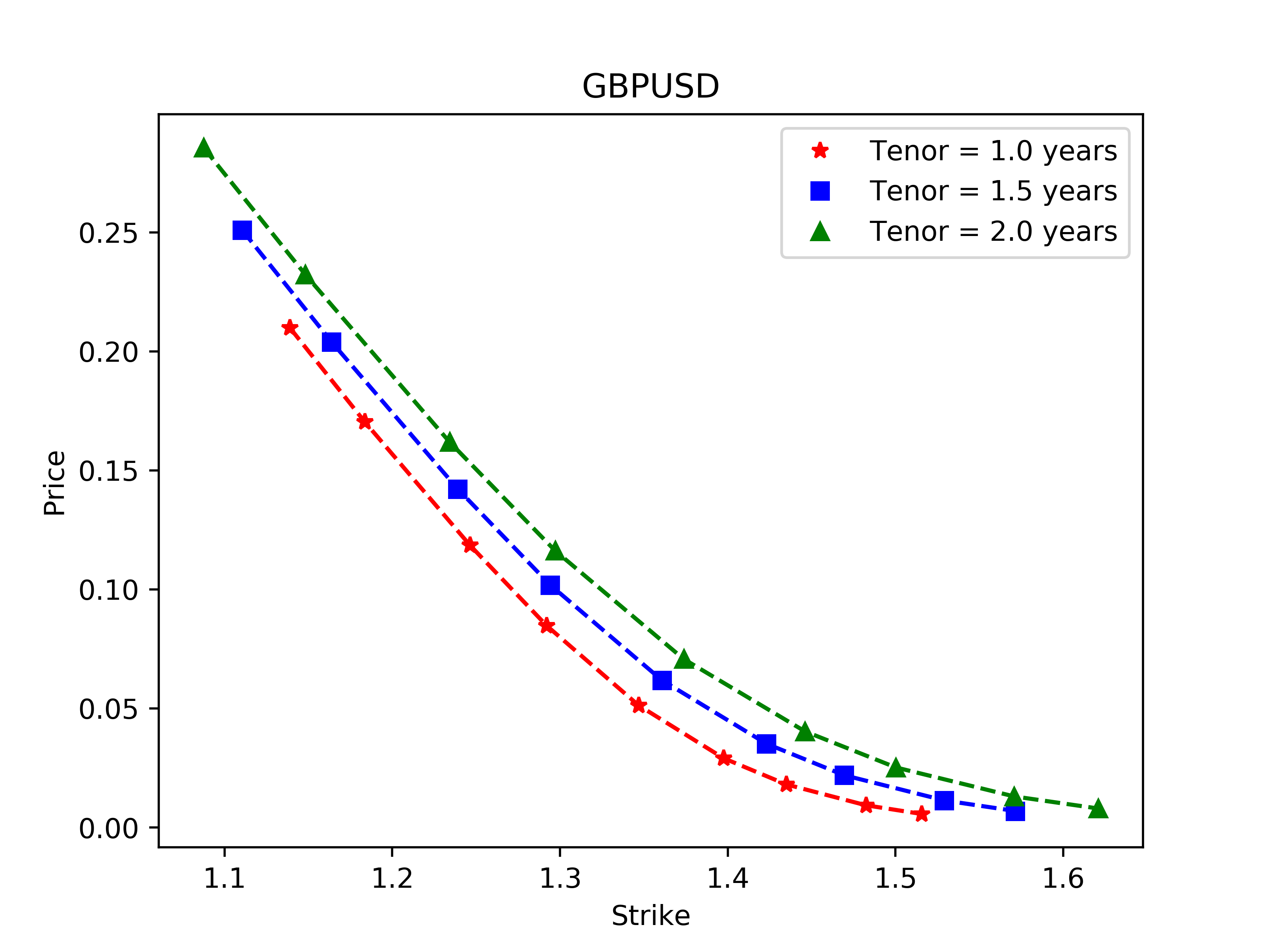}
	\end{minipage}\hspace{-5mm}
	\hfill
	\begin{minipage}[b]{0.5\textwidth} 
		\includegraphics[width=\textwidth,height=0.7\textwidth]{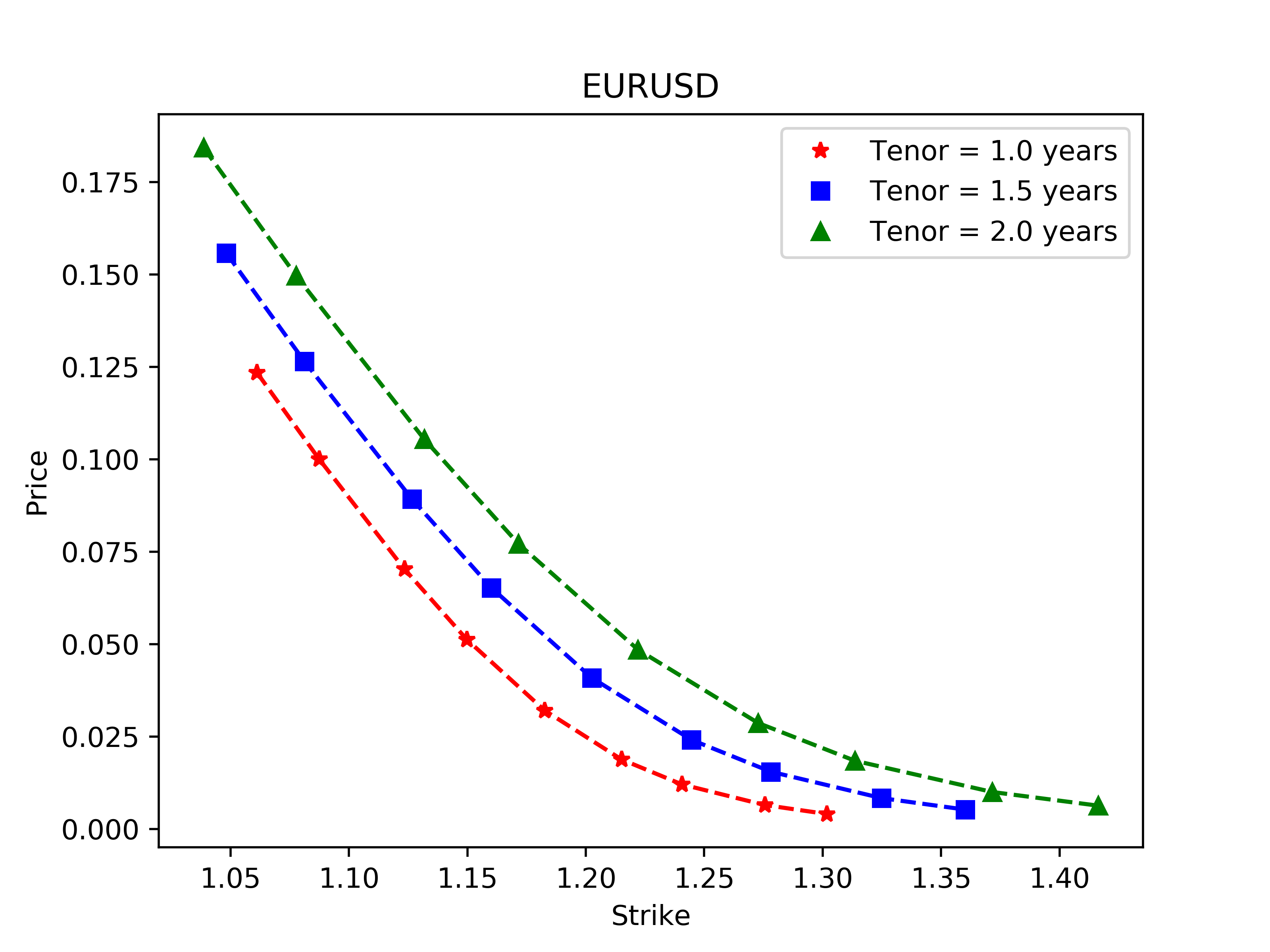}
	\end{minipage}\hspace{-5mm}
	\hfill
	
	\begin{minipage}[b]{0.5\textwidth}
		\includegraphics[width=\textwidth,height=0.7\textwidth]{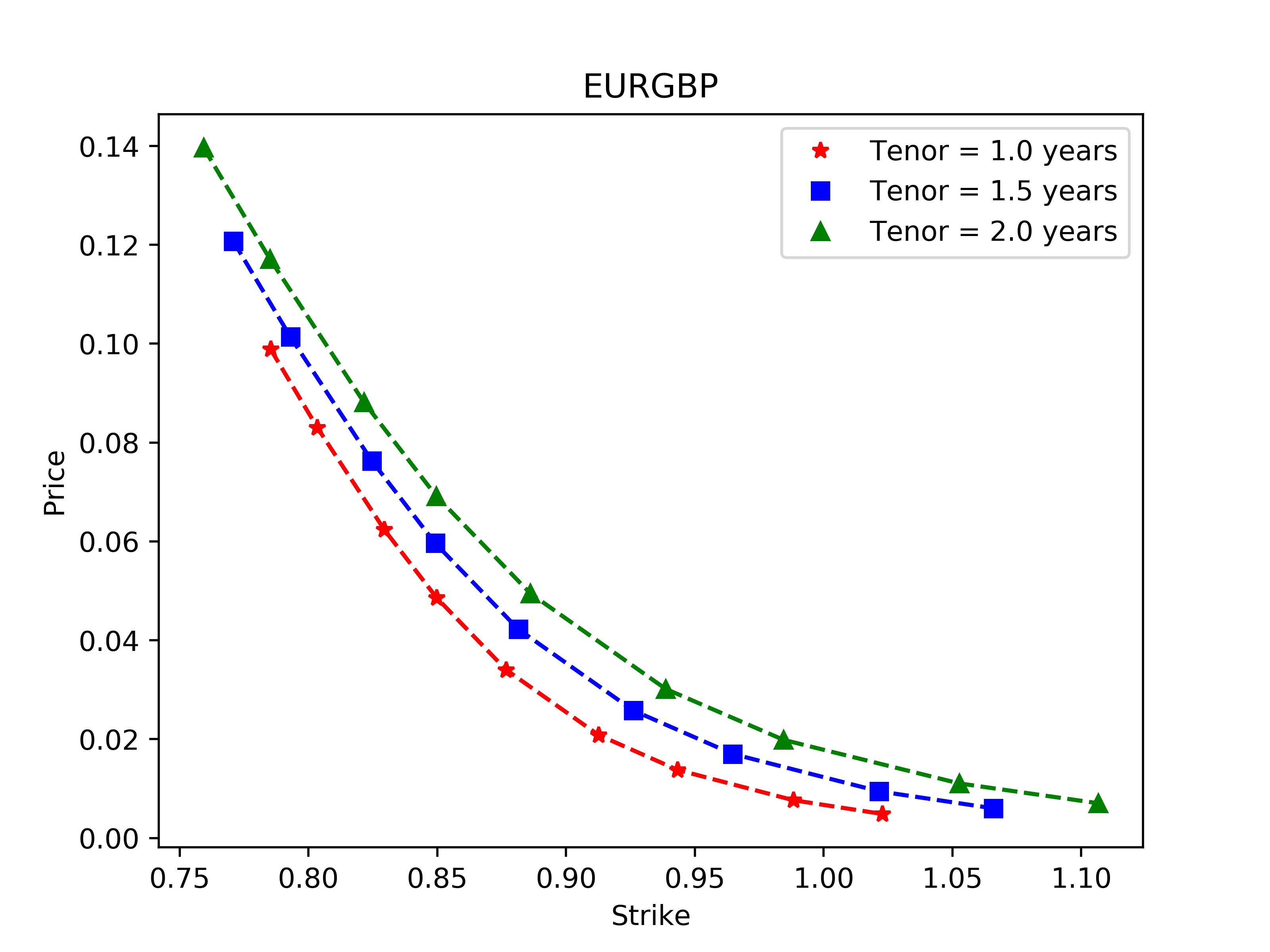}
	\end{minipage}\hspace{-5mm}
	\hfill
	\caption*{The table showcases data for call option prices for three 
	different currency pairs. Further, initial values at time point 0 are 
		given by $\textrm{GBPUSD} = 1.32$, $\textrm{EURUSD} = 1.14$, 
		$\textrm{EURGBP} = 1.15$.}
	\label{fg:Data}
\end{figure}
We employ the NN methodology to compute the optimal values. In practice, we 
modify slightly the formulation of $\Do^m$ in Section \ref{sec:NNtheory}. 
Instead of estimating the risk-neutral marginal distributions from the call/put 
prices and considering all static position $\varphi_{t, i} \in \mathfrak{N}_{l, 
1, m}$, we directly consider $\varphi_{t, i}$ which are linear combinations of 
traded call options. Figure \ref{fig:FX} displays the range of no-arbitrage 
prices under two different information structures: without and with the option 
prices for the third currency pair. We expect that the EURGBP prices, 
$X_3=X_{t, 2}/X_{t, 1}$, capture important information about the correlation 
structure between $X_1$ and $X_2$ which should be material for pricing of our 
Asian option even if $X_3$ is not explicitly present in its payoff. This is 
indeed true as seen from the price tightening in Figure \ref{fig:FX} between 
the upper and the lower bars. In particular, we see that the upper bound 
shrinks by nearly 50\% when the additional information is included. 
Furthermore, for both scenarios, we consider also the impact of the ability to hedge 
dynamically in the assets as compared to only taking static positions in the options. 
Again, this leads to a tightening of the bounds, albeit less pronounced. We 
believe that this example showcases the capacity of our methodology to capture, 
in a fully non-parametric but quantitative manner, the importance of market 
information for a given pricing problem. Naturally, its full potential should 
be explored on a much larger and more comprehensive range of market 
data/problems. This is left for future research.

\begin{figure}
	\caption{Price intervals for an Asian call option 
	written on the spread process 
	between $\rm{GBPUSD}$ and $\rm{EURUSD}$.}
	\includegraphics[width=\textwidth]{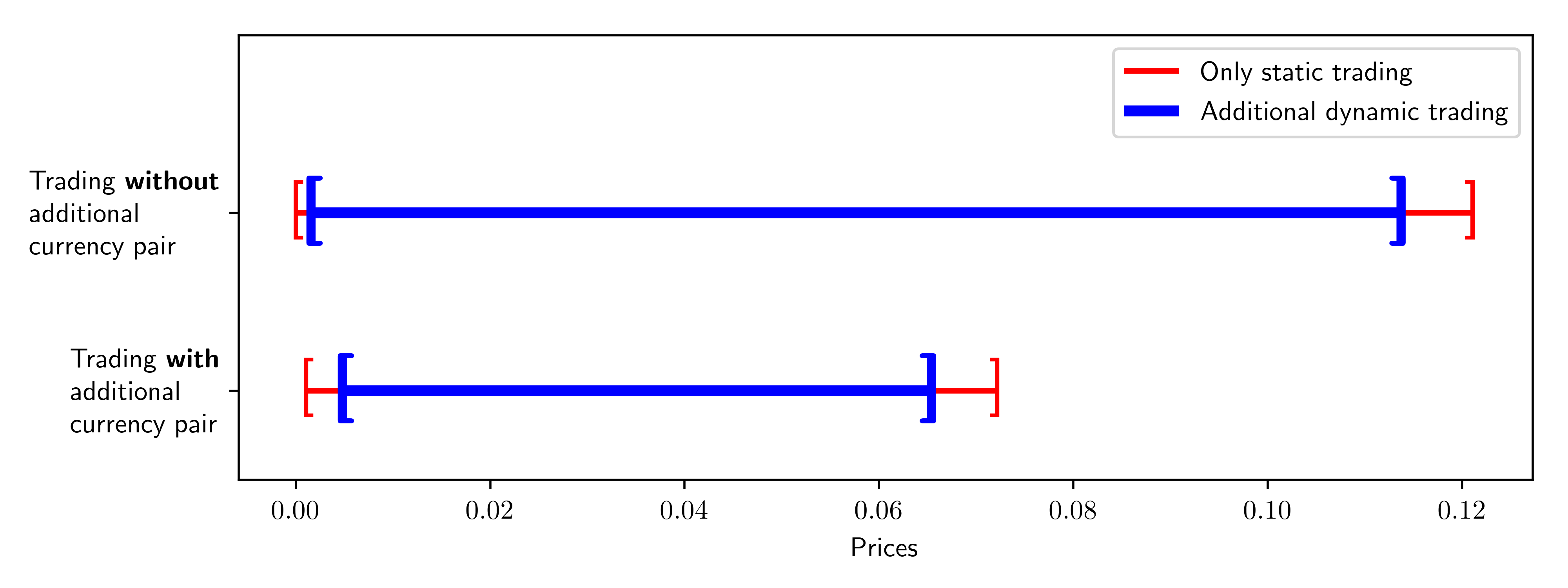}
	\caption*{Numerically computed ranges of arbitrage free prices for the Asian option with payoff in \eqref{eq:FXpayoff}. For the red 
	lines, all models (without martingale assumption), or equivalently without 
	dynamic trading are considered. For the blue lines, only martingale models 
	are considered. For the bottom lines, additional trading in call options 
	written on $X_{t, 3} = X_{t, 2}/X_{t, 1}$ is allowed.}
	\label{fig:FX}
\end{figure}

\section{A structural result on the covariance functional}
\label{sec:structure}
In this section we study a two-period model, i.e., $T=2$, and develop 
structural results for the optimizers. Our study was partly inspired by Figure 
\ref{fg:OptimizerSpread} where the two time step optimizer has the structure of 
a probability distribution on a line superimposed with the OT optimizer. We 
shall see in Theorem \ref{thm:structure} below that this structure is in fact 
universal, under certain assumptions on the marginal distributions. To make 
notation simpler, we write $X = (X_i)_{1\le i \le d}$, $Y= (Y_i)_{1\le i \le 
d}$ instead of $X_1 = (X_{1,i})$, $X_2= (X_{2,i})$, and $\mu=(\mu_i)$, 
$\nu=(\nu_i)$ instead of $\mv_1 = (\m_{1, i})$, $\mv_2 = (\m_{2, i})$. Hence we 
consider the one-step martingales $(X_i, Y_i)_{1 \le i \le d}$ with marginals 
$X_i \sim \mu_i$, $Y_i \sim \nu_i$. For each $\pi = \Lc(X,Y)\in \Mc(\mu,\nu)$,  
define $\pi^1 = \pi \circ X^{-1}$, $\pi^2 = \pi \circ Y^{-1}$ to be the 
$d$-dimensional marginals of $\pi$. We assume that all $\mu_i, \nu_i$ have 
finite second moments. Define $[d]=\{1,2,...,d\}$.

We will consider the maximization problem \eqref{def:mmot}  with the cost 
functional which concerns the mutual covariance of the value of assets at the 
terminal time
\begin{align}
c(Y)=\sum_{1\le i < j \le d} c_{ij}Y_iY_j, \quad\text{where} \quad c_{ij} \ge 0.
\end{align}
We can assume without loss of generality that every $Y_j$ is involved in 
$c(Y)$, that is, for each $j \in [d]$ there exists nonzero $c_{ij}$ or 
$c_{jk}$; otherwise we may simply ignore the $j$-th asset in our optimization 
problem.  We can regard $[d]$ as the set of nodes of a graph where $i,j$ is 
connected by an (undirected) edge if $c_{ij} > 0$. Then $[d]$ is decomposed 
into connected subgraphs, and it is clear that the MMOT problem can be 
decomposed accordingly. Therefore, without loss of generality we can assume 
that $[d]$ is connected.

For our structural result, we also introduce the following notion.
\begin{definition}[Linear Increment of Marginals (LIM)] We say that 
marginals 
$(\mu_i, \nu_i)_{1\le i \le d}$ satisfy {\rm LIM} if there exists a centered 
non-Dirac probability measure $\kappa$, and positive constants $a_1,...,a_d$ 
such that 
	$$ \nu_i = \mu_i * {a_i}_\# \kappa$$
	where ${a_i}_\# \kappa$ is the push-forward of $\kappa$ by the scaling map 
	$x \mapsto a_ix$. In other words, $\Lc(Y_i)= \Lc(X_i + a_iZ)$ where $Z \sim 
	\kappa$ is independent of $X$ and $\E[Z]=0$, $\mathbb{P}[Z\neq 0]>0$. 
\end{definition}
\begin{example} LIM holds when each pair of marginals $\mu_i, \nu_i$ are 
Gaussians 
with the same mean and increasing variance.
\end{example}

\begin{theorem}\label{thm:structure} Let $c(Y)= \sum_{1\le i < j \le d} 
c_{ij}Y_iY_j$ and assume  $c_{ij}$'s induce a connected graph on $[d]$. Suppose $(\mu_i, \nu_i)_{1\le i \le 
d}$ satisfy {\rm LIM} with constant $a = (a_1, ..., a_d)$. Let $L$ be the 
one-dimensional subspace of $\R^d$ spanned by $a$. Then every {\rm MMOT} 
 $\pi$ for the maximization problem \eqref{def:mmot}, if disintegrated as $\pi(dx,dy) = \pi_x(dy) 
\pi^1(dx)$, satisfies:
	\begin{enumerate}
		\item $\supp\, \pi_x \subset L+x \q \pi^1$ - almost every $x,$
		\item $\pi^1$ is an optimal transport plan in $\Pi(\mu_1,...,\mu_d)$ 
		for the maximization problem with the corresponding cost $c(X)= 
		\sum_{1\le i < j \le d}c_{ij} X_iX_j$.
	\end{enumerate}
	Moreover if $d=2$ or $3$ and the first marginals $(\mu_i)_i$ are continuous 
	(i.e., $\mu_i(\{x\})=0$ for all $x \in \R$ and $i \in [d]$), 
	then $\pi^1$ is 
	unique for every {\rm MMOT} $\pi$.
\end{theorem}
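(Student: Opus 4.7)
The plan is to reduce the MMOT problem to a classical multi-marginal OT problem via the martingale condition, exploiting the LIM structure. Set $\Delta_i := Y_i - X_i$. The martingale property yields $\E_\pi[\Delta_i \mid X] = 0$, hence
\[
\E_\pi[c(Y)] \;=\; \E_\pi[c(X)] \;+\; \sum_{1 \le i < j \le d} c_{ij}\, \E_\pi[\Delta_i \Delta_j].
\]
A direct computation gives $\E_\pi[\Delta_i^2] = \E_\pi[Y_i^2] - \E_\pi[X_i^2]$, which under LIM equals $a_i^2 \sigma^2$ with $\sigma^2 := \int z^2\,d\kappa > 0$, \emph{independently of} $\pi$. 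Cauchy--Schwarz then gives $\E_\pi[\Delta_i \Delta_j] \le a_i a_j \sigma^2$, and since $c_{ij} \ge 0$,
\[
\E_\pi[c(Y)] \;\le\; \E_{\pi^1}[c(X)] + \sigma^2 \sum_{i<j} c_{ij} a_i a_j \;\le\; \oo(\mu) + \sigma^2 \sum_{i<j} c_{ij} a_i a_j,
\]
where $\oo(\mu)$ is the OT supremum in $\Pi(\mu_1, \ldots, \mu_d)$ with cost $c(X) = \sum_{i<j} c_{ij} X_i X_j$.

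To show this bound is sharp, I would construct an explicit optimizer. Take $\pi^1$ to be any OT optimizer in $\Pi(\mu_1, \ldots, \mu_d)$ for the cost $c(X)$ and, on an enlarged space, set $Y_i := X_i + a_i Z$ with $Z \sim \kappa$ independent of $X$. Centering of $\kappa$ gives $\E[Y_i \mid X] = X_i$, and LIM gives $Y_i \sim \nu_i$; hence the resulting $\pi \in \Mc(\mu, \nu)$ attains equality throughout, so $\po(\mu, \nu) = \oo(\mu) + \sigma^2 \sum_{i<j} c_{ij} a_i a_j$.

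Next I analyze the equality case for an arbitrary MMOT optimizer $\pi$. Attaining the bound forces $\E_\pi[\Delta_i \Delta_j] = a_i a_j \sigma^2$ whenever $c_{ij} > 0$. The equality case in Cauchy--Schwarz, combined with $\E_\pi[\Delta_i^2] = a_i^2 \sigma^2 > 0$, yields $\Delta_j = (a_j/a_i)\Delta_i$ a.s.\ along every such edge. By the connectedness hypothesis, and since every index participates in the cost, these relations propagate along paths in the graph to give a single random variable $W$ with $\Delta_i = a_i W$ a.s.\ for all $i \in [d]$. Consequently $Y - X = a W \in L$ almost surely, proving (1); and substituting back, optimality of $\pi$ forces $\E_{\pi^1}[c(X)] = \oo(\mu)$, proving (2).

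For uniqueness when $d = 2, 3$ and $(\mu_i)$ are continuous, by (2) it suffices to establish uniqueness of the OT optimizer for $\sum_{i<j} c_{ij} X_i X_j$ on $\Pi(\mu_1, \ldots, \mu_d)$. When $d=2$ this is classical one-dimensional OT with cost $c_{12} x_1 x_2$, $c_{12} > 0$, whose unique optimizer is the comonotone (Fr\'echet--Hoeffding) coupling of $\mu_1, \mu_2$. The case $d=3$ is the main obstacle: one must invoke (or reprove) a multi-marginal OT uniqueness result for a sum-of-pairwise-products cost, for instance via a Monge-type twist argument leveraging the specific structure of the three pairwise terms together with continuity of each $\mu_i$. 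Handling the three coupled terms simultaneously, as opposed to a single product as in $d=2$, is the delicate technical point of the entire argument.
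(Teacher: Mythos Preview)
Your argument for parts (1) and (2) is correct and genuinely different from the paper's. The paper proceeds via duality: it builds explicit dual optimizers $(\phi_i,\psi_i,h)$ from an OT dual for the cost $c(X)$ together with the convex function $G(x)=\sum_{i<j}\tfrac12(\lambda_{ij}x_i-\sigma_{ij}x_j)^2$, and uses a tangent-plane lemma (the contact set of $G$ with its affine tangent at $x_0$ is exactly $x_0+L$) to force any optimizer onto the equality set. Your route bypasses duality entirely: the martingale decomposition $\E_\pi[c(Y)]=\E_{\pi^1}[c(X)]+\sum_{i<j}c_{ij}\E_\pi[\Delta_i\Delta_j]$, the observation that $\E_\pi[\Delta_i^2]=a_i^2\sigma^2$ is fixed by LIM, and Cauchy--Schwarz do all the work. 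This is more elementary and more transparent; the paper's approach, on the other hand, produces the dual optimizers explicitly, which is of independent interest for the hedging interpretation and for the subsequent uniqueness argument.

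Where your proposal falls short is precisely where you flag it: the $d=3$ uniqueness. You correctly reduce to uniqueness of the OT optimizer for $\sum_{i<j}c_{ij}x_ix_j$ in $\Pi(\mu_1,\mu_2,\mu_3)$, but then only gesture at a ``Monge-type twist argument''. The paper's argument here is concrete and worth knowing: pass to convex $c$-conjugate potentials $f_i$ (Legendre--Fenchel), so each $f_i$ is differentiable $\mu_i$-a.e.; the first-order condition at a contact point $(x_1,x_2,x_3)$ reads $f_1'(x_1)=c_{12}x_2+c_{13}x_3$, which confines $(x_2,x_3)$ to a line of nonpositive slope (vertical if $c_{13}=0$). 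Simultaneously, for fixed $x_1$ the conditional contact set in the $(x_2,x_3)$-plane is a nondecreasing graph (two-marginal structure), and the condition $f_2'(x_2)=c_{12}x_1+c_{23}x_3$ shows it is the graph of a function of $x_2$. A nondecreasing graph and a line of nonpositive slope intersect in at most one point, yielding a Monge map $x_1\mapsto(x_2,x_3)$ and hence uniqueness. This is the missing piece in your proposal.
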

To prove the theorem, we shall need the following lemma.
\begin{lemma}\label{tangent}
	Let $c(x)= \sum_{1\le i < j \le d} c_{ij}x_ix_j$ and assume $c_{ij}$'s induce a connected graph on $[d]$. Let $\la_{ij}=\sqrt{c_{ij}\frac{a_j}{a_i}}$, 
	$\sigma_{ij}=\sqrt{c_{ij}\frac{a_i}{a_j}}$, and 
	$g_{ij}(x)=\frac{1}{2}\big{(}\la_{ij}x_i - \sigma_{ij}x_j\big{)}^2$ for 
	each $i<j$. Define $G(x)=\sum_{i<j} g_{ij}(x)$, and let $H_{x_0}(x) = 
	G(x_0)+ \nabla G(x_0) \cdot (x-x_0)$ be the affine tangent function of $G$ 
	at $x_0 \in \R^d$. Then $$\{x \in \R^d \,|\, G(x)=H_{x_0}(x) \} = x_0 + L,$$
	where $L$ is the one-dimensional subspace of $\R^d$ spanned by $a = 
	(a_1,...,a_d)$. 
\end{lemma}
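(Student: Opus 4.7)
The plan is to exploit the fact that $G$ is a non-negative convex quadratic form (being a sum of squares $g_{ij}$), so the difference between $G$ and its tangent plane at $x_0$ is itself a non-negative quadratic form in $x-x_0$, and hence the equality set is an affine subspace, namely $x_0 + \ker(\nabla^2 G)$. The problem then reduces to identifying this kernel.

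More precisely, since $G$ is homogeneous quadratic, write $G(x) = \tfrac12 x^T A x$ with $A = \nabla^2 G \succeq 0$. A direct computation gives
\[
G(x) - H_{x_0}(x) = \tfrac12 (x-x_0)^T A (x-x_0) = G(x-x_0),
\]
so $\{x : G(x) = H_{x_0}(x)\} = x_0 + \{y : G(y) = 0\} = x_0 + \ker A$. It thus suffices to prove $\ker A = L$.

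Since $G(y) = \sum_{i<j} g_{ij}(y)$ with each $g_{ij} \ge 0$, we have $G(y) = 0$ if and only if $g_{ij}(y) = 0$ for every pair $(i,j)$, which, for the edges of the graph (i.e.\ whenever $c_{ij}>0$), reads $\lambda_{ij} y_i = \sigma_{ij} y_j$. Substituting the definitions of $\lambda_{ij}$ and $\sigma_{ij}$ and using $a_i>0$ yields the equivalent relation $y_i/a_i = y_j/a_j$ along every edge. Invoking connectedness of the graph, all ratios $y_i/a_i$ coincide, so $y = t a$ for some $t \in \R$, i.e., $y \in L$. Conversely, every $y \in L$ satisfies these relations and hence $G(y) = 0$. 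This establishes $\ker A = L$ and finishes the proof.

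There is no real obstacle here: the argument is a straightforward linear-algebraic computation once one recognises the sum-of-squares structure of $G$. The only point to state carefully is the reduction ``$G(y)=0 \Rightarrow y_i/a_i$ constant across the graph'', for which one uses only that the $a_i$ are strictly positive and that the edge set $\{(i,j): c_{ij}>0\}$ connects $[d]$.
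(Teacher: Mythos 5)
Your proof is correct. It differs from the paper's in how it establishes the equivalence between tangency and vanishing of the $g_{ij}$'s, although both end with the identical connectedness argument. You exploit the fact that $G$ is a \emph{homogeneous} positive-semidefinite quadratic form to derive the exact identity $G(x) - H_{x_0}(x) = G(x-x_0)$, which reduces the contact set to $x_0 + \{G = 0\}$, and then you analyse $\{G=0\}$ by the sum-of-squares decomposition. The paper argues instead by convexity: for the inclusion $x_0 + L \subset K$ it observes that $G$ is constant on $x_0+L$, so smooth convexity forces $\nabla G$ to be constant there; for the reverse inclusion it notes that $G$ is affine on $K$, hence each convex summand $g_{ij}$ is affine on $K$, which again forces $a_j x_i - a_i x_j$ to be constant on $K$. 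Your computation is more elementary and gives the contact set in closed form, at the cost of being tailored to the quadratic case; the paper's convexity reasoning is slightly more synthetic and would extend to $G$ being any sum of convex functions, each constant along $L$. For the lemma as stated, both are complete; your route is arguably the cleaner one.
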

\begin{proof} Note that $x \mapsto g_{ij}(x)$ is constant if $a_jx_i - a_ix_j$ 
is constant. Hence $G$ is constant on $x_0 + L$. Since $G$ is smooth and 
convex, this implies that $\nabla G$ is constant on $x_0 + L$, yielding $x_0 + 
L \subset K:=\{x \in \R^d \,|\, G(x)=H_{x_0}(x) \}$.
	
	Conversely, clearly $G$ is an affine function on $K$, and since $g_{ij}$ 
	are convex, all $g_{ij}$ are also affine on $K$. But any nonzero $g_{ij}$ 
	can be affine only when $a_jx_i - a_ix_j$ is constant. Since $x_0 \in K$ 
	and $[d]$ is connected, this implies that $K \subset x_0 + L$.
\end{proof}
\begin{proof}[Proof of Theorem \ref{thm:structure}] Let $x=(x_1,...,x_d)$, 
$y=(y_1,...,y_d) \in \R^d$. We will construct functions $\phi_i \in 
L^1(\mu_i)$, $\psi_j \in L^1(\nu_j)$, $h : \R^d \to \R^d$ such that
	\begin{align}\label{ineq}
	&\sum_{i=1}^d \phi_i(x_i) + \sum_{i=1}^d \psi_i(y_i) + h(x) \cdot (y-x) \ge 
	c(y) \quad on \quad \R^d \times \R^d,
	\end{align}
	but for any solution $\pi^* \in \mathcal{M}(\mu,\nu)$ to the problem 
	\eqref{def:mmot}, we have
	\begin{align}\label{eq}
	\sum_{i=1}^d \phi_i(x_i) + \sum_{i=1}^d \psi_i(y_i) + h(x) \cdot (y-x) = 
	c(y) \quad \pi^* - a.e. \,(x,y).
	\end{align}
	We shall call the triplet $(\phi_i, \psi_i, h)$ a dual optimizer, and 
	$\pi^*$ a multi-marginal martingale optimal transport (MMOT); see \cite{Lim}. Along 
	the proof, we will see that the equality \eqref{eq} implies that $y-x \in 
	L$.
	
	To begin, let $f_i \in L^1(\mu_i)$, $i=1,...,d$, be a dual optimizer for 
	the optimal transport with the cost $c(x)$, that is, for any optimal 
	transport $\gamma \in \Pi(\mu_1,...,\mu_d)$
	\begin{align}
	\label{otineq}&\sum_{i=1}^d f_i(x_i) \ge c(x) \quad \forall x \in \R^d,\\
	\label{oteq}&\sum_{i=1}^d f_i(x_i) = c(x) \quad \gamma - a.e. \, x.
	\end{align}
	For the existence of such a dual optimizer, see
	\cite{Vi03, Vi09}. Recall the functions $g_{ij}$ and $G$ in Lemma 
	\ref{tangent}, and note that $G(x)=-c(x) + \sum_{i=1}^d b_i x_i^2$ for some 
	$b_i \ge 0$. Define $\phi_i(x_i) = f_i(x_i)- b_ix_i^2$ and $\psi_i(y_i)=b_i 
	y_i^2$. Then the above may be rewritten as
	\begin{align}
	\label{otineq1}&-\sum_{i=1}^d \phi_i(x_i) \le G(x) \quad \forall x \in 
	\R^d,\\
	\label{oteq1}&-\sum_{i=1}^d \phi_i(x_i) = G(x) \quad \gamma - a.e. \, x.
	\end{align}
	Next, define $h(x) = - \nabla G(x)$, so that we have 
	\begin{align}\label{eq2}
	G(x)- h(x) \cdot (y-x) \le G(y), \text{ and the equality holds iff } y-x 
	\in L
	\end{align}
	by Lemma \ref{tangent}. With \eqref{otineq1} this implies \eqref{ineq}. Moreover, 
	notice that if $(x,y)$ satisfies the equality \eqref{eq}, then it holds $y-x \in L$ and the equality 
	\eqref{oteq}.
	
	Now we will construct a multi-marginal martingale transport $\pi^* \in \Mc(\mu,\nu)$ 
	such that $\pi^*$ is concentrated on the equality set in \eqref{eq}, that 
	is $\pi^*(P)=1$ where
	\begin{align*}
	P:=\{(x,y) \in \R^d \times \R^d \,|\, \sum_{i=1}^d \phi_i(x_i) + 
	\sum_{i=1}^d \psi_i(y_i) + h(x) \cdot (y-x) = c(y) \}.
	\end{align*}
	We also define $P_1:=\{x \in \R^d \,|\,\sum_{i=1}^d f_i(x_i) = c(x)\}$. In 
	order to construct $\pi^*(dx,dy) = \pi^*_x(dy) \pi^{*1}(dx)$, firstly set 
	$\pi^{*1}$ to be an optimal transport, i.e. $\pi^{*1} \in 
	\Pi(\mu_1,...,\mu_d)$ and $\pi^{*1}(P_1)=1$. Next, let $\sigma$ be the distribution of the vector $(a_1Z,\ldots, a_dZ)$ with $Z\sim\kappa$, and note that $\sigma(L)=1$ and $\sigma \in \Pi({a_1}_\# \kappa,...,{a_d}_\# \kappa)$.
		
	For each $x \in \R^d$, define the 
	kernel $\pi^*_x$ to be the $\sigma$ translated by $x$. As $\sigma$ has its 
	barycenter at $0$, $\pi^*_x$ is clearly a martingale kernel. Now to ensure 
	that $\pi^* \in \Mc(\mu,\nu)$, it remains to show that $\pi^{*2} \in 
	\Pi(\nu_1,...,\nu_d)$.  But notice that this follows from the facts 
	$\pi^{*1} \in \Pi(\mu_1,...,\mu_d)$, $\sigma \in \Pi({a_1}_\# 
	\kappa,...,{a_d}_\# \kappa)$, the definition of $\pi^*_x$, and finally the 
	assumption LIM, i.e. $\nu_i = \mu_i * {a_i}_\# \kappa$.

	Now observe that $\pi^*_x(L+x) =1$ and $\pi^{*1}(P_1)=1$ imply, by 
	\eqref{otineq1}, \eqref{oteq1}, and \eqref{eq2}, that $\pi^*(P)=1$. This 
	immediately implies the optimality of $\pi^*$ to the MMOT problem 
	\eqref{def:mmot} by the following standard argument: let $\pi \in 
	\Mc(\mu,\nu)$ be any multi-marginal martingale transport. By integrating both sides 
	of \eqref{ineq} by $\pi$, we get
	$$\sum_{i=1}^d \int \phi_i\, d\mu_i + \sum_{i=1}^d \int\psi_i \,d\nu_i \ge \int 
	c\, d\pi$$
	since $\int h(x) \cdot (y-x) \,\pi(dx,dy)=0$. On the other hand, as 
	$\pi^*(P)=1$ we get
	$$\sum_{i=1}^d \int \phi_i\, d\mu_i + \sum_{i=1}^d \int\psi_i \,d\nu_i = \int 
	c\, d\pi^*.$$
	Hence $\int c\, d\pi^* \ge \int c\, d\pi$, and the optimality of $\pi^*$ 
	follows. The argument also shows conversely that any solution $\pi^*$ must 
	be concentrated on $P$, and this implies $\pi^*_x(L+x) =1$ and 
	$\pi^{*1}(P_1)=1$ by \eqref{otineq1}, \eqref{oteq1}, \eqref{eq2}. But 
	$\pi^{*1}(P_1)=1$ precisely means that $\pi^{*1}$ is an optimal transport 
	as claimed in the second part of the theorem. 

	Lastly, we prove the uniqueness statement. Let $\pi$ be an MMOT and let 
	$\gamma=\pi \circ X^{-1}$. As we have just shown, $\gamma$ satisfies 
	\eqref{otineq}, \eqref{oteq} for some $f_i \in L^1(\mu_i)$, $i=1,...,d$. If 
	$d=2$, it is well known in optimal transport theory (see \cite{Vi03}) that 
	the contact set $P_1=\{x \in \R^2 \ | \ \sum_{i=1}^2 f_i(x_i) = c(x)\}$ is 
	a subset of a nondecreasing graph, that is 
	\[ (x,y), (x',y') \in P_1 \text{ and } x < x' \implies y \le y',\]
	and this property immediately implies that there exists a unique 
	probability measure concentrated on $P_1$ which respects the marginal 
	constraints $\mu_1, \mu_2$. This proves the uniqueness assertion for $d=2$.

	Now let $d=3$ and $P_1=\{x \in \R^3 \ | \ \sum_{i=1}^3 f_i(x_i) = c(x)\}$. By 
	permuting the indices $1,2,3$ if necessary, by connectedness there are two cases of cost 
	function
	\[ c(x)=c_{12}x_1x_2 +c_{13}x_1x_3+c_{23}x_2x_3, \quad \text{or} \quad 
	c(x)=c_{12}x_1x_2 +c_{23}x_2x_3, \]
	where $c_{ij} >0$. Again consider \eqref{otineq}, \eqref{oteq}. By the 
	standard technique, called Legendre-Fenchel transform, we can assume that 
	$f_i$'s are convex functions, and hence in particular $f_i$ is 
	differentiable $\mu_i$-a.s.. Let $A_i$ be the set of differentiable points 
	of $f_i$, $i=1,2,3$. Now assume $(x_1,x_2,x_3) \in P_1$ and $x_1 \in A_1$. 
	Then by the first-order condition, \eqref{otineq}, \eqref{oteq} implies 
	\begin{align}
	f_1'(x_1)= c_{12}x_2 + c_{13}x_3,\nonumber
	\end{align}
	where in the latter cost function case $c_{13}=0$.
	Let $Q_1(x_1):=\{(x_2,x_3) \in A_2 \times A_3 \ | \ f_1'(x_1)= c_{12}x_2 + 
	c_{13}x_3\}$, which is a linearly decreasing, or vertical, graph in 
	$x_2x_3$-plane. On the other hand, the following `conditional contact set'
	\[P_1(x_1) := \{(x_2,x_3) \in A_2 \times A_3 \ | \ \sum_{i=1}^3 f_i(x_i) = 
	c(x)\}
	\]
	is a nondecreasing graph as before. But notice that in fact $P_1(x_1)$ is a 
	graph of a nondecreasing function defined on $A_2$, since again 
	\eqref{otineq}, \eqref{oteq} implies
	\[f_2'(x_2)=c_{12}x_1 +c_{23}x_3.\]
	We conclude that the intersection
	\[P_1(x_1) \cap Q_1(x_1)\]
	consists of at most one element for $\mu_1$-almost every $x_1$, and this 
	implies that there exist two functions $x_2=\phi(x_1), x_3=\psi(x_1)$, 
	well-defined $\mu_1$-a.s., such that any probability measure concentrated 
	on $P_1$ is in fact concentrated on the set
	\[G:=\{(x_1,x_2,x_3) \ | \ x_2=\phi(x_1), x_3=\psi(x_1)\}.\]
	By standard averaging argument, this implies the uniqueness of $\gamma$.  
	This completes the proof of Theorem \ref{thm:structure}.
\end{proof}

\appendix
\section{Discretization}\label{app:discrete}
This section shows a sample discretization and formulation of an MMOT problem 
as an LP. We take the case $T=2$ for the spread option from Table 
\ref{table:marginals}. Recall that $\m_{1, 1}=\m_{1,2}=\Uc\big([-1,1]\big)$, 
$\m_{2, 1}=\Uc\big([-3,3]\big)$ and $\m_{2, 2}=\Uc\big([-2,2]\big)$. Define 
discrete approximating probability measures via \eqref{eq:DSdiscretisation}, 
i.e.,
\begin{equation}
\begin{split}\label{eq:dis_pcoc}
\alpha^i_k \q = \q\m_{1, i}^n\big(\big\{k/n\big\}\big)&\q :=\q 
\int_{[(k-1)/n,(k+1)/n)}\big(1-|nk-x|\big)\mu_{1,i}(dx),  \\
\beta^i_k \q = \q \m_{2, i}^n\big(\big\{k/n\big\}\big)&\q :=\q 
\int_{[(k-1)/n,(k+1)/n)}\big(1-|nk-y|\big)\mu_{2,i}(dy),
\end{split}
\end{equation}
so that $\m^n_{1, i}\leqcvx \m^n_{2, i}$ and $\Wc_1(\m^n_{t, i},\m_{t, i})\le 
1/n$ for $t,i=1, 2$.
Furthermore
\b*
&&\alpha^1_{-n}~=~\alpha^1_n~=~1/4n\quad \mbox{and}  \quad\alpha^1_i~=~1/2n 
\mbox{ for } -n+1\le i\le n-1, \\
&&\alpha^2_{-n}~=~\alpha^2_n~=~1/4n \quad \mbox{and}  \quad \alpha^2_j~=~1/2n 
\mbox{ for } -n+1\le j\le n-1, \\
&&\beta^1_{-3n}~=~\beta^1_{3n}~=~1/12n \quad \mbox{and}  \quad \beta^1_k~=~1/6n 
\mbox{ for } -3n+1\le k\le 3n-1, \\
&&\beta^2_{-2n}~=~\beta^2_{2n}~=~1/8n \quad \mbox{and}  \quad \beta^2_l~=~1/4n 
\mbox{ for } -2n+1\le l\le 2n-1.
\e*
Then $\po_{2/n}(\mv_1^n, \mv_2^n)$ in the mentioned case of the spread option 
as objective is given by the following \emph{linear program} (LP): 
\begin{align*}
&\max_{(p_{i,j,k,l}) \in\R_+^M} ~ \sum_{i=-n}^{n} \sum_{j=-n}^{n} 
\sum_{k=-3n}^{3n} \sum_{l=-2n}^{2n}  p_{i,j,k,l} |y^1_k-y^2_l|^p \\
\mbox{s.t. } &\sum_{j=-n}^{n} \sum_{k=-3n}^{3n} \sum_{l=-2n}^{2n} p_{i,j, k,l} 
~=~\alpha^1_{i}, \quad \mbox{for } i=-n,\ldots, n,\\
&\sum_{i=-n}^{n} \sum_{k=-3n}^{3n} \sum_{l=-2n}^{2n}  p_{i,j, k,l} 
~=~\alpha^2_{j}, \quad \mbox{for } j=-n,\ldots, n,\\
&\sum_{i=-n}^{n} \sum_{j=-n}^{n} \sum_{l=-2n}^{2n} p_{i,j, k,l} ~=~\beta^1_{k}, 
\quad \mbox{for } k=-3n,\ldots, 3n,\\
&\sum_{i=-n}^{n} \sum_{j=-n}^{n} \sum_{k=-3n}^{3n} p_{i,j, k,l} ~=~\beta^2_{l}, 
\quad \mbox{for } l=-2n,\ldots, 2n,\\
&\left| \sum_{k=-3n}^{3n} \sum_{l=-2n}^{2n} \big(y^1_k-x^1_i\big) 
p_{i,j,k,l}\right| ~\le~ 2/n \sum_{k=-3n}^{3n} \sum_{l=-2n}^{2n} p_{i,j,k,l}, \\
&\hspace{8cm}\quad \mbox{for } i, j =-n,\ldots, n, \\
& \left|\sum_{k=-3n}^{3n} \sum_{l=-2n}^{2n} \big(y^2_l-x^2_j\big) 
p_{i,j,k,l}\right| ~\le~ 2/n \sum_{k=-3n}^{3n} \sum_{l=-2n}^{2n} p_{i,j,k,l}, \\
&\hspace{8cm}\quad \mbox{for } i, j=-n,\ldots, n,
\end{align*}
where
\b*
&&x^1_i~=~i/n, \quad \mbox{for } i = -n,\ldots, n, \\
&&x^2_j~=~ j/n, \quad \mbox{for } j = -n,\ldots, n, \\
&&y^1_k~=~k/n, \quad \mbox{for } k= -3n,\ldots, 3n, \\
&&y^2_l~=~l/n, \quad \mbox{for } l = -2n,\ldots, 2n,
\e* 
and $M:=(2n+1)^2(6n+1)(4n+1)$,  $N:=(6n+1)(4n+1)$. 

%
%
\bibliographystyle{apalike}

\bibliography{bib0}

\end{document}